\newtheorem{lem}{Lemma}[section]
\newtheorem{defn}{Definition}[section]
\newtheorem{cor}{Corollary}[section]
\newtheorem{thm}{Theorem}[section]
\newtheorem{example}{Example}[section]
\newtheorem{rem}{Remark}[section]
\numberwithin{equation}{section}
\numberwithin{figure}{section}
\numberwithin{table}{section}
\theoremstyle{plain}
\renewcommand{\d}{{\rm{d}}}%% for use in \int \d x   (to make the d upright)
\newcommand{\beq}{\begin{equation}}
\newcommand{\eeq}{\end{equation}}
\newcommand{\var}{\text{\normalfont Var}}
\newcommand{\cov}{\text{\normalfont Cov}}
\newcommand{\ee}{\mathbb{E}}
\newcommand{\pp}{\mathbb{P}}
\newcommand{\rr}{\mathbb{R}}
\newcommand{\nn}{\mathbb{N}}
\newcommand{\zz}{\mathbb{Z}} 
\newcommand{\qq}{\mathbb{Q}}
\def\qed{\rule{2mm}{2mm}}
\begin{document}

\title{Permutation Testing for Monotone Trend}

%\author{Joseph P. Romano\footnote{Supported by NSF Grant MMS-1949845.}
 % \\
%Departments of Statistics and Economics \\
%Stanford University \\
%\href{mailto:romano@stanford.edu}{romano@stanford.edu}
%\and
%Marius A. Tirlea\footnote{Supported by NSF Grant MMS-1949845.} \\
%Department of Statistics \\
%Stanford University \\
%\href{mailto:mtirlea@stanford.edu}{mtirlea@stanford.edu} 
%}

\author{Joseph P. Romano\footnote{Supported by NSF Grant MMS-1949845.}
 \\
Departments of Statistics and Economics \\
Stanford University \\
\href{mailto:romano@stanford.edu}{romano@stanford.edu}
\and
Marius A. Tirlea\footnote{Supported by NSF Grant MMS-1949845.} \\
Department of Statistics \\
Stanford University \\
\href{mailto:mtirlea@stanford.edu}{mtirlea@stanford.edu} 
}

\date{\today}

\maketitle

\bigskip

\begin{abstract}
In this paper,  we consider the fundamental problem of testing for monotone trend in a time series.
While the term ``trend" is commonly used and has an intuitive meaning, it is first crucial to specify its exact meaning in a hypothesis testing context.  A commonly used well-known test is the Mann-Kendall test, which  we show does not offer Type 1 error control even in large samples.  On the other hand, by
an appropriate studentization of the Mann-Kendall statistic,  we construct permutation tests 
that offer asymptotic error control quite generally, but retain the exactness property of permutation tests for i.i.d. observations.  We also  introduce  ``local" Mann-Kendall statistics as a means of testing for local rather than global trend in a time series.  Similar properties of permutation tests are obtained for these tests as well.
\end{abstract}

\bigskip

\noindent{\bf Key Words:}. Hypothesis Testing,  Mann-Kendall Test, Permutation Test,   Time Series, Trend.
\bigskip

\section{Introduction}

In both theoretical and applied time series analysis, the issue of determining whether or not a given sequence of observations exhibits monotone trend is a crucial element of understanding the process under study.   An important  nonparametric method used for testing monotone trend is the Mann-Kendall trend test (see \cite{mann1945} and \cite{kendall}), which tests the hypothesis 

$$
H_{\text{MK}}: X_1, \, \dots,\, X_n \text{ i.i.d. } 
$$

\noindent using a rank statistic (defined in (\ref{equation:mann-kendall})). Under the null hypothesis $H_{\text{MK}}$, the Mann-Kendall test  of nominal level $\alpha$  is exact in finite samples and asymptotically valid. However, while it is the case that an i.i.d. sequence of random variables $\{X_i : i \in [n]\}$ \textit{should} intuitively be described as exhibiting no monotone trend, it is also intuitively reasonable that a non-i.i.d. sequence $\{X_i: i \in [n]\}$ may also exhibit a lack of monotone trend. Indeed, in the time series contexts in which such problems are usually considered, there is an implicit dependence structure in time of the sequence under examination, in which case the null hypothesis $H_{\text{MK}}$ is \textit{not} the null hypothesis intended to be under consideration. On account of this, significant problems of error control may arise, since the null hypotheses of i.i.d. and ``no monotone trend" are quite different. This may lead to issues of Type 1 and Type 3 (or directional) error control analogous  to those described in \cite{tirlea} for testing autocorrelations.  In the context of this paper,  the issue is that one can reject the null hypothesis and conclude that there exists a monotone trend not because there does truly exist some monotone trend, but on account of the test only controlling Type 1 error for i.i.d. sequences, and not for weakly dependent sequences exhibiting no monotone trend.

We propose two nonparametric testing procedures for the hypothesis 

$$
H: \{X_i: i \in \nn\} \text { exhibits no monotone trend,} 
$$

\noindent where several precise definitions of the null hypothesis $H$ are considered. In particular, we define the notions of \textit{global} and \textit{local} monotone trend, and consider permutation testing procedures based on the full, or global, Mann-Kendall statistic. We also introduce a novel statistic, termed the local Mann-Kendall statistic, which is used for testing the null hypothesis of a lack of local monotone trend.

In the broader context of testing for and estimation of monotone trend, \cite{dietz_killeen} provide a multivariate test of trend with application to a pharmaceutical setting, \cite{zhao_woodroofe} consider isotonic estimators of sequences with monotone trend under stationarity, and \cite{hanasymptotics} extend the Mann-Kendall test to the setting of independent, but not necessarily i.i.d. sequences. In a more applied setting, \cite{yue2002} discuss application of the Mann-Kendall and Spearman's $\rho$ test to hydrological time series, \cite{hamed_2008} considers a modification of the Mann-Kendall test under scaling, and \cite{power_mk_test} consider the power of different versions of the Mann-Kendall test 
in a simulation study.
Permutation tests of trend based on OLS regression are discussed in \cite{tirlea-OLS}.

There has been a resurgence in the use of permutation and randomization tests,
as they offer the prospect of valid inference in complex settings.  However, they can often fail (even in large samples) without careful implementation.  Indeed, this paper is part of a growing body of work that shows how permutation tests can indeed be constructed that offer exact finite-sample validity under a restricted null hypothesis, but also offer asymptotic validity in much more generality.   Many such instances of this phenomenon are given in \cite{tirlea}.

Section \ref{sec.def.monotone} provides a precise definition of the null hypotheses of a lack of monotone trend under consideration. The main results for the global Mann-Kendall test are given in Section \ref{sec.global}, in which we provide conditions for the asymptotic validity of the permutation test when $\{X_n: n \in \nn\}$ is a stationary, absolutely regular sequence, satisfying fairly standard mixing conditions. For instance, the results in this section may be applied freely to a large class of stationary ARMA sequences. Section \ref{sec.local} introduces the local Mann-Kendall statistic, and furnishes the main results for asymptotic validity of the permutation test when $\{X_n: n \in \nn\}$ is a stationary, $\alpha$-mixing sequence. Section \ref{sec.simulations.monotone.trend} provides simulation studies illustrating the results of the previous sections. The majority of the proofs are deferred to the supplement, owing to the length and technical requirements required to prove the results.

Most of our results require some notions of stationarity, weak dependence, and absolute regularity, whose definitions we now review.
A sequence $\{X_n: n \in \nn\}$ is called stationary if it is strongly stationary, i.e. if, for all $k \in \nn$, for all $\{n_i: i \in [k]\} \subset \nn$, and for all $m \in \nn$,  

$$
(X_{n_1}, \, \dots, \, X_{n_k}) \overset{d}= (X_{m + n_1}, \, \dots, \, X_{m + n_k}) \, \, .
$$

\noindent With these notational conventions, we turn to a brief discussion of the notion of weak dependence in a sequence of random variables.

\begin{defn} \rm Let $\{X_n:n\in \nn\}$ be a sequence of $(\Omega, \, \mathcal{F}, \, \pp)$-measurable random variables. For each $n \in \nn$, let $\mathcal{F}_n = \sigma(X_m: m \leq n)$, and let $\mathcal{G}_n = \sigma(X_m: m \geq n)$. For $n \in \nn$, let $\alpha_X(n)$ be Rosenblatt's $\alpha$-mixing coefficient, 

$$
\alpha_X(n) = \sup_{m \in \nn} \sup_{A \in \mathcal{F}_m, \,  B \in \mathcal{G}_{m+ n} }\left| \pp(A \cap B) - \pp(A) \pp(B) \right| \, \, .
$$

\noindent We say that the sequence $\{X_n\}$ is strongly mixing or $\alpha$-mixing if $\alpha_X(n)\to 0$ as $n\to\infty$.

For $n \in \nn$, let $\beta_X(n)$ be the $\beta$-mixing or absolute regularity coefficient, defined as 

$$
\beta_X (n) = \sup_{m \in \nn} \ee \left[ \sup_{B \in \mathcal{G}_{m + n} } \left| \pp(B \, | \, \mathcal{F}_m ) - \pp(B) \right| \right] \, \, .
$$ 

\noindent We say that the sequence $\{X_n\}$ is absolutely regular or $\beta$-mixing if $\beta_X( n)\to0$ as $n\to\infty$. 

\end{defn}

\begin{rem} \rm As discussed in \cite{bradley2005}, we have that, for any sequence $\{X_n: n \in \nn\}$ of $\rr^d$-valued random variables, and for each $n \in \nn$,

$$
2\alpha_X(n) \leq  \beta_X (n) \, \, .
$$

\noindent It follows that any $\beta$-mixing sequence is also $\alpha$-mixing. \qed 
\end{rem}

\section{Defining monotone trend}\label{sec.def.monotone} 

We turn our attention to establishing the meaning of the expression ``monotone trend" used in  the remainder of this paper. In \cite{mann1945}, it is the case that monotone trend is defined solely in contrast to the null of i.i.d., and is not explicitly defined in itself. However, appropriately defining this phrase, or, more pertinently, defining a lack of monotone trend, is essential in order to conduct a hypothesis test of a lack of monotone trend, since otherwise defining the null becomes impossible. 

In this work, we restrict attention to consideration of weakly dependent sequences,  as defined earlier.  When considering the monotonicity (or lack thereof) of the sequence $\{X_n: n \in \nn\}$, we provide some examples of the difficulties arising when using traditional notions of monotonicity, such as $\ee[X_i] < \ee[X_j]$ for $i < j$.

\begin{example}\label{ex.unbounded} (Positive local trend, negative global trend) \rm Let $c, \, \epsilon > 0$. Consider the sequence $\{Y_n: n \in \nn\}$ such that $Y_0 = 0$, and, for all $n \geq 1$, 

$$
Y_n =\sum_{i=1}^n X_i \, \, ,
$$

\noindent where the $X_i$ are i.i.d. such that 

$$
X_i = \begin{cases} 
1, \, \, \text{with probability } 1 - \epsilon \, \, ,\\
-c, \, \, \text{with probability } \epsilon \, \, .
\end{cases}
$$

\noindent Let $\delta > 0$, and let $M \in \nn$. By letting $\epsilon = (1- \delta)^{1/M}$ and $c$ be sufficiently large, we have that $\{Y_n\}$ is a (strictly) decreasing sequence in expectation, and so this sequence  exhibits a negative \textit{global} monotone trend with probability 1, since, by the strong law of large numbers, 

$$
\frac{1}{n} Y_n \overset{a.s.}\to \ee[X_1] < 0 \, \, , 
$$

\noindent from which it follows that, for $i < j$, as $j-i \to \infty$, $\pp(Y_i > Y_j ) \to 1$, and in fact, for any constant $K \in \rr^+$, $\pp(Y_i > Y_j + K) \to 1$ as $j - i \to \infty$.

\begin{figure}[h!]
\begin{center}
\begin{tabular}{c}
{\mbox{\epsfxsize=100mm\epsfbox{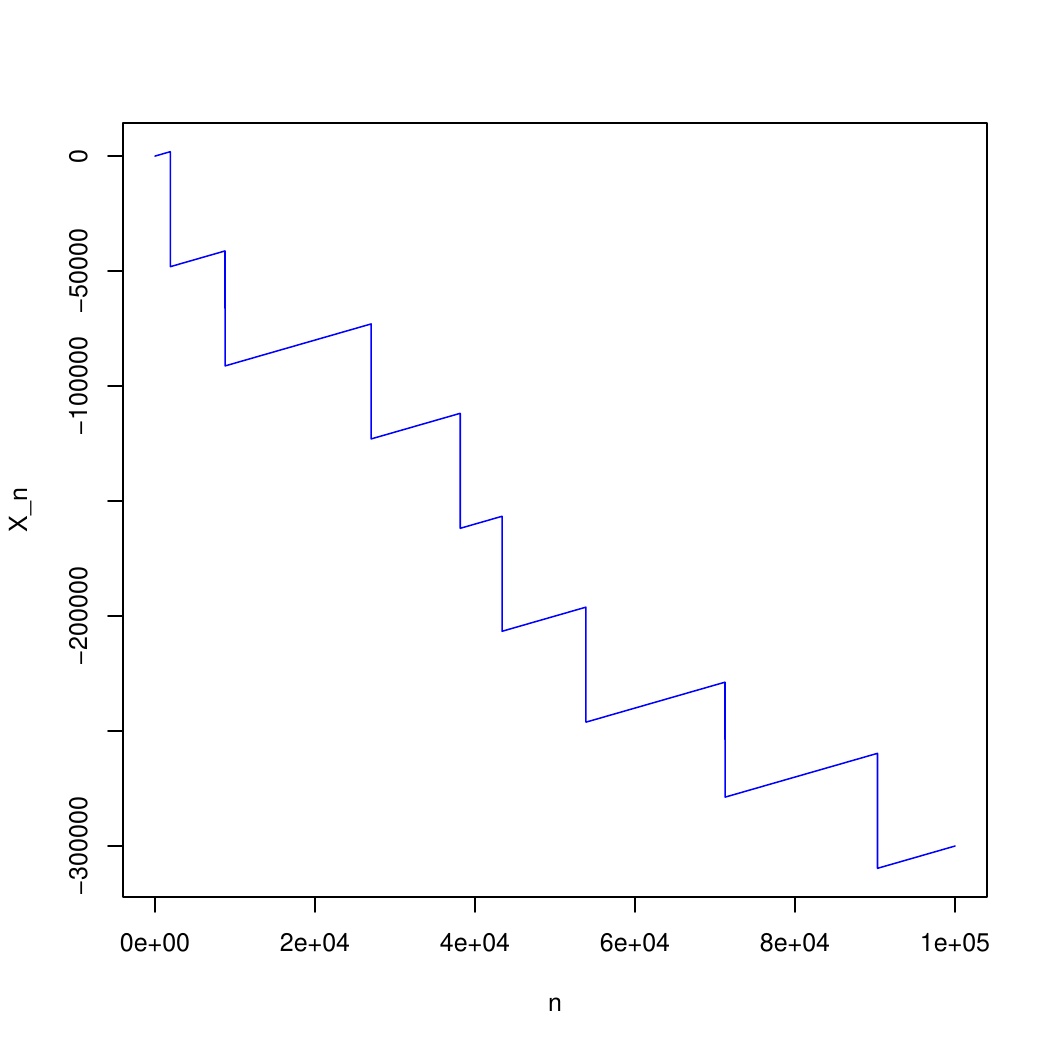}}} 
\end{tabular}
\end{center}
\caption{A sample path from the process described in Example \ref{ex.unbounded}, with $n = 10^5$, $\epsilon = 10^{-4}$, and $M = 5 \cdot 10^4$.} 
\label{fig.unbounded.ex}
\end{figure}

\noindent However, such a sequence exhibits a \textit{local} monotone trend in the opposite direction in the following sense: for any $n \in \nn$, we have that 

$$
\pp( X_n < X_{n+1} < \dots <X_{n + M- 1}) = 1- \delta \, \, .
$$

\noindent To conclude, since it is possible to construct such a sequence for any choice of $\delta$ and $M$, we may construct a sequence $\{X_n: n \in \nn\}$ with the following property: for any $i \in \nn$, the sequence $(X_i, \, \dots, \, X_{i+M- 1})$ is strictly increasing with probability $(1- \delta)$, but $X_n \to -\infty$ with probability 1. \qed

\end{example}

Example \ref{ex.unbounded} indicates that some distinction is required between the notions of local and global monotone trend. In particular, as will be illustrated in Section \ref{sec.global}, strictly stationary and weakly dependent (in particular, absolutely regular) sequences cannot, by definition, exhibit global trend, in the sense that, as $|j - i| \to \infty$,

$$
\pp(X_i \leq X_j) - \pp(X_i \geq X_j) \to 0 \, \, .
$$

\noindent On account of this property of strictly stationary and weakly dependent sequences, it follows that any test of monotone trend in this context is, in fact, a test of stationarity, since one cannot disentangle the properties of stationarity and lack of monotone trend (see Remark \ref{stat.no.global.trend}). While there is an argument to be made that the restriction of stationarity is too strong in such a setting, this assumption is necessary to ensure some limiting behaviour of functions of the sequence $\{X_n: n \in \nn\}$; indeed, even without considering weakly dependent processes, there exist sequences of independent, uniformly bounded random variables for which, for example, the sample mean does not converge in distribution.

In light of this, we therefore consider a test of global monotone trend to test the null hypothesis 

$$
H_0: \{X_n: n \in \nn\} \text{ is stationary,}
$$

\noindent where, for methods developed later in this paper,  we consider the power of such tests against alternatives the form of which is explicitly specified.

Fortunately, the issue of appropriately defining the notion of local monotone trend is a simpler one; in particular, local monotonicity is a property which is not directly tied to the stationarity of a sequence. For a simple example of a sequence with no local monotone trend, consider $X_n$ being i.i.d. random variables drawn from a continuous distribution $F$. In this setting, which is the null setting of the original test of \cite{mann1945}, any ordering of any local subsequence of this process $(X_n, \, \dots,\, X_{n+M-1})$ is equally likely, and any definition of local monotone trend should exclude this example from exhibiting such a trend. In contrast, a sequence for which it should be said that a local monotone trend exists is given in the example below.

\begin{example}\label{ex.markov.chain}(Markov chain with local monotone trend) \rm Let $M \in \nn$, and let $\epsilon > 0$. Let 

$$
\pi_{-M} = \frac{\epsilon}{1 - (1-\epsilon)^{2M+1}} \, \, .
$$

\noindent For each $i \in \{-M + 1, \, \dots, \, M\}$, let 

$$
\pi_i = \pi_{-M} \cdot (1-\epsilon)^{i + M}
$$

\noindent Consider the Markov chain $\{X_n: n \in \nn\}$ on the state space $\{-M, \, \dots,\, M\}$ with initial distribution $\pi$, and transition matrix 

$$
P_{i, \, j} = \begin{cases} 
1- \epsilon, \, \, &j = i+1 \text{ and } i \leq M-1 \\
1- \epsilon, \, \, &i = j = M \\
\epsilon, &j = -M \, \, .
\end{cases}
$$

\noindent This is an irreducible, aperiodic Markov chain on a finite state space with stationary distribution $\pi$. In particular, by the weak Markov property, this sequence is strictly stationary, and is absolutely regular, as a consequence of standard convergence results for irreducible, aperiodic Markov chains on a finite state space (see \cite{freedman}). A sample path from such a process is shown below.

\begin{figure}[h!]
\begin{center}
\begin{tabular}{c}
{\mbox{\epsfxsize=100mm\epsfbox{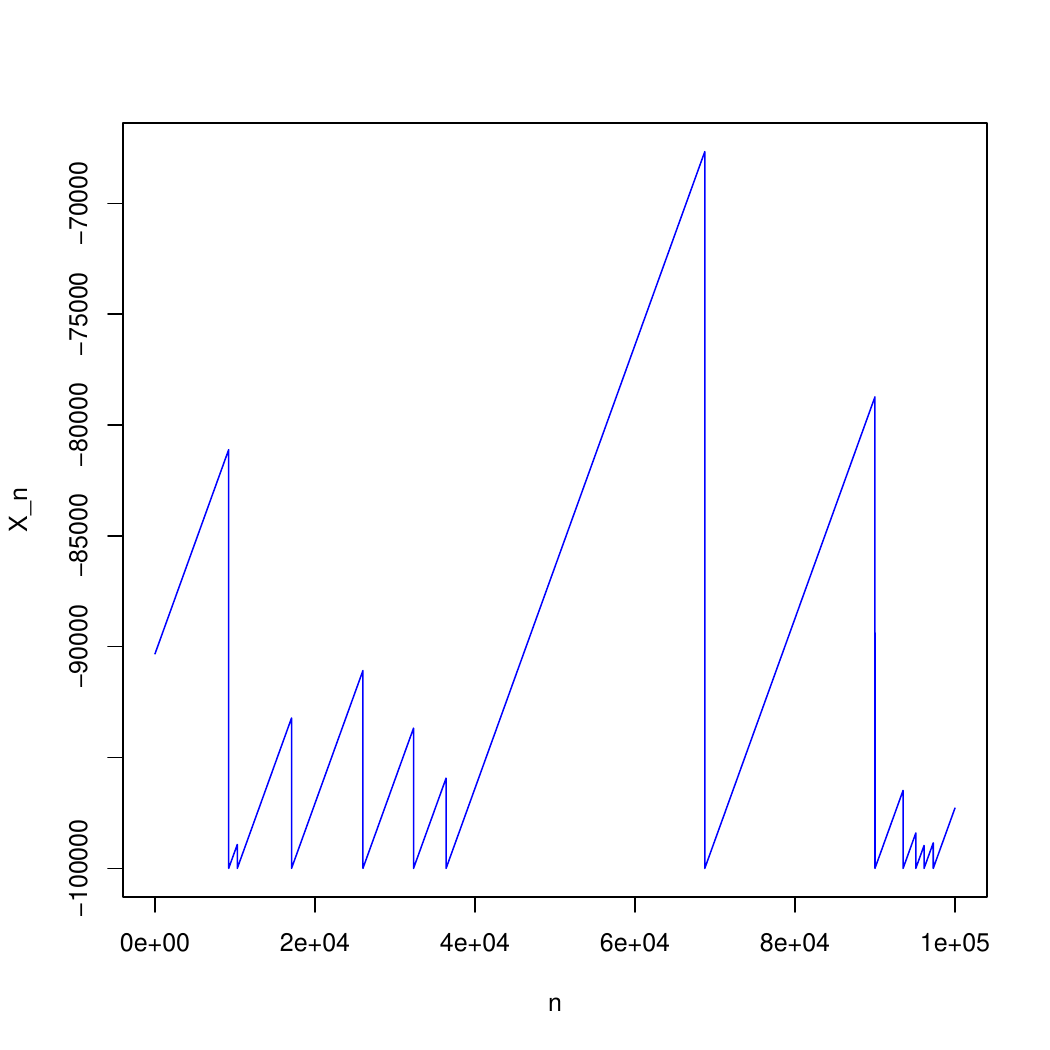}}} 
\end{tabular}
\end{center}
\caption{A sample path from the process described in Example \ref{ex.markov.chain}, with $n = 10^5$, $\epsilon = 10^{-4}$, and $M = 10^5$.} 
\label{fig.bounded.ex}
\end{figure}

For an appropriate choice of $\epsilon$ and $M$, this process exhibits the same local behaviour as the process in Example \ref{ex.unbounded}; namely, for any arbitrary run length $K \in \nn $ and any arbitrarily small $\delta > 0$, one can construct such a process for which, for any $n$,

$$
\pp( X_n < X_{n+1} <\dots < X_{n+K}) \geq (1- \delta) \, \, ,
$$

\noindent i.e. one may construct a sequence with arbitrarily strong local monotone trend. \qed

\end{example}

In light of these examples, we proceed to define the null hypotheses of no global monotone trend and no local monotone trend. In order to test the hypothesis of no global monotone trend in the setting of weakly dependent sequences, we use the null hypothesis 

$$
H_0^{(g)}: \{X_n\} \text{ is strictly stationary.} 
$$

\noindent In order to test the hypothesis of no local monotone trend of order $M$ in the setting of strictly stationary, weakly dependent sequences, we use the null hypothesis 

$$
H^{(l)}_{0, \, M}: \frac{2}{(n-M)M} \sum_{i < j \leq i + M} \left( \pp(X_i < X_j) - \pp(X_i > X_j) \right) = 0 \, \, .
$$

\noindent Of course, under strict stationarity, this is equivalent to the null hypothesis 

$$
\tilde{H}^{(l)}_{0,\, M}: \sum_{i=2}^{M+1} (\pp(X_1 < X_i) - \pp(X_1 > X_i) ) = 0 \, \, .
$$

\noindent With these null hypotheses precisely defined, we may now begin construction of appropriate permutation testing procedures for both global and local monotone trend.

\section{Testing for global trend}\label{sec.global}

In this section, we consider the problem of testing the null hypothesis 

$$
H^{(g)}_0: \{X_i: i \in [n]\} \text{ is stationary,} 
$$

\noindent in the setting where the sequence $\{X_i: i \in [n]\}$ is absolutely regular. When the distribution of $(X_1, \, \dots,\, X_n)$ is invariant under permutation, i.e. the sequence is exchangeable, the randomization hypothesis holds, and so one may construct permutation tests of the hypothesis $H_0^{(g)}$ with exact level $\alpha$. However, in the case of absolutely regular sequences, by Lemma S.3.1 in \cite{tirlea}, exchangeability and i.i.d. are equivalent conditions.  This implies that any permutation testing procedure will retain the property of exactness under the additional assumption that the $X_i$ are i.i.d., and this is the only condition under which the randomization hypothesis holds in this setting. 

However, if the realizations of the sequence $\{X_i: i \in [n]\}$ are not independent or are not i.i.d., the test may not be valid even asymptotically, i.e. the rejection probability of a permutation test may not be equal to $\alpha$ or even close to $\alpha$ as $n\to \infty$. Our goal is therefore to construct a testing procedure which has asymptotic rejection probability equal to $\alpha$, but which also retains finite sample exactness under the additional assumption that the $X_i$ are i.i.d., and so appropriate consideration of the asymptotic properties of the permutation distribution and the test statistic must be undertaken. We wish to consider a permutation test based on the Mann-Kendall statistic, and so we may begin by defining the Mann-Kendall statistic and analyzing the limiting behavior of the permutation distribution $\hat{R}_n$ based on this test statistic.

\subsection{The global Mann-Kendall statistic}\label{sec.full}

\begin{defn} \rm For $X_1, \, \dots,\, X_n$ a sequence of random variables, let 

$$\label{equation:mann-kendall}
U_n = U_n (X_1, \, \dots, \, X_n) = \binom{n}{2} ^{-1} \sum_{i = 1} ^n \sum_{j = i} ^{n} \left( I\{ X_j > X_i \} -  I\{ X_i> X_j \}\right) \, \, .
$$

\noindent be the (global) Mann-Kendall statistic. 

\label{mk.defn} 
\end{defn}

\begin{thm} Let $\{X_i: i \in \nn\}$ be a sequence of random variables such that, for all $i \neq j$, $\pp(X_i =X_j ) = 0$.

\noindent Let $\hat{R}_n$ be the permutation distribution, based on the test statistic $\sqrt{n} U_n$, with associated group of transformations $S_n$, the symmetric group of order $n$. We have that, as $n \to \infty$,

$$
\sup_{t \in \rr} \left| \hat{R}_n (t) - \Phi(3t/2) \right | \, \, ,
$$

\noindent where $\Phi$ is the standard Gaussian c.d.f. 

\label{basic.perm.lim}
\end{thm}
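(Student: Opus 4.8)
The key structural observation is that, on the probability-one event that the $X_i$ are all distinct, the permutation distribution $\hat R_n$ does not depend on the data at all. Indeed, $U_n$ depends on its arguments only through their relative order, so $\sqrt n\, U_n(X_{\pi(1)},\dots,X_{\pi(n)})$ depends on $\pi$ only through the rank-order permutation induced by $(X_{\pi(1)},\dots,X_{\pi(n)})$, and as $\pi$ ranges over $S_n$ this induced permutation ranges over $S_n$ exactly once each. Hence $\hat R_n$ equals the (deterministic) distribution function, call it $R_n^{\circ}$, of $\sqrt n\, U_n$ evaluated at a uniformly random permutation of $\{1,\dots,n\}$ --- equivalently, the distribution function of $\sqrt n\, U_n(Y_1,\dots,Y_n)$ for $Y_1,\dots,Y_n$ i.i.d.\ from any continuous law. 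It therefore suffices to prove $\sup_{t\in\rr}|R_n^{\circ}(t)-\Phi(3t/2)|\to 0$ for this fixed sequence of distribution functions.

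Write $K_n=\binom n2 U_n=\sum_{i<j}\operatorname{sign}(X_j-X_i)$ for Kendall's (unnormalized) score, and let $d_i$ be the number of indices $j<i$ with $X_j>X_i$, so that $K_n=\binom n2-2\sum_{i=1}^n d_i$. Under a uniformly random relabeling, the classical Lehmer-code decomposition of a uniform random permutation gives that $d_1,\dots,d_n$ are \emph{independent}, with $d_i$ uniform on $\{0,1,\dots,i-1\}$. From $\E[d_i]=(i-1)/2$ and $\var(d_i)=(i^2-1)/12$ one gets $\E[K_n]=0$ and, after a short computation, $\var(K_n)=n(n-1)(2n+5)/18$, whence $\var(\sqrt n\,U_n)=n\binom n2^{-2}\var(K_n)=\tfrac{2(2n+5)}{9(n-1)}\to\tfrac49$. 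Moreover $\sum_i d_i$ is a sum of independent summands bounded by $n$ whose variance grows like $n^3$, so the Lyapunov condition (with exponent $2+1$) holds trivially; hence $\bigl(\sum_i d_i-\E\sum_i d_i\bigr)/\sqrt{\var\sum_i d_i}\ind N(0,1)$, and therefore $\sqrt n\,U_n\ind N(0,4/9)$. In particular $R_n^{\circ}(t)\to\Phi\bigl(t/(2/3)\bigr)=\Phi(3t/2)$ for every fixed $t$.

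Since the limiting distribution function $t\mapsto\Phi(3t/2)$ is continuous (and strictly increasing) on all of $\rr$, P\'olya's theorem upgrades this pointwise convergence to uniform convergence, giving $\sup_{t\in\rr}|R_n^{\circ}(t)-\Phi(3t/2)|\to0$; since $\hat R_n=R_n^{\circ}$ with probability one, the theorem follows. As an alternative to the Lehmer-code argument, one could instead invoke directly the classical asymptotic normality of the Mann--Kendall statistic under i.i.d.\ sampling (Mann, Kendall), combined with the variance computation above.

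I do not expect a serious obstacle here: the genuinely important point is the short observation that this particular permutation distribution is non-random, and the remaining work --- computing the asymptotic variance $4/9$ and verifying a CLT for $\sqrt n\,U_n$ under a uniform relabeling --- is routine, with the mild bookkeeping in the variance identity being the most error-prone part.
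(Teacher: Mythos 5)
Your proof is correct, and its key step --- that on the probability-one no-ties event the permutation distribution is the fixed, data-free rank distribution, i.e.\ the null distribution of $\sqrt{n}\,U_n$ under i.i.d.\ continuous sampling --- is exactly the paper's argument. The only difference is that the paper then simply cites Mann (1945) for the limit $N(0,4/9)$ (hence $\Phi(3t/2)$, with the statement's missing ``$\to 0$'' understood), whereas you prove it self-containedly via the Lehmer-code decomposition into independent uniform inversion counts, the classical variance identity $\var(K_n)=n(n-1)(2n+5)/18$, Lyapunov, and P\'olya's theorem; all of these computations check out.
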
 

\begin{proof} Let $\Pi_n \sim \text{Unif}(S_n)$, independent of $\{X_i, \, i \in [n]\}$. Let 

$$
U_{\Pi_n} = U_n( X_{\Pi_n(1)}, \, \dots, \, X_{\Pi_n(n)} ) \, \, .
$$

\noindent Conditional on the sequence $\{X_i, \, i \in [n] \}$, we observe that, on account of the lack of ties, we have that each ordering of the $X_{\Pi_n(i)}$ is equally likely. Since the distribution of $U_n$ only depends on the ranks of the sequence $\{X_i: i \in [n]\}$, the result follows by \cite{mann1945}.
\end{proof}

We observe that, other than the assumption of no ties among the $\{X_i\}$, no other conditions are required for the result of Theorem \ref{basic.perm.lim}. This occurs since $U_n$ is a rank statistic, and, under the action of a random permutation $\Pi_n$, the ranks of the $\{X_i\}$ are uniformly distributed over the set of permutations $S_n$, and so the permutation distribution $\hat{R}_n$ is exactly equal to the distribution of $U_n$ under the when the $X_i$ are independent and identically distributed. 

In order to appropriately assess the asymptotic validity of the permutation test based on this statistic, we must turn our attention to determining the asymptotic distribution of $U_n$.

\subsection{Asymptotic distribution of the Mann-Kendall statistic}

Having determined the  limiting behavior of the permutation distribution based on $U_n$, we now consider the true unconditional limiting  distribution of the test statistic, since consistency would require these
limiting distributions be the same. 
Due to allowing quite general forms of weak dependence, the  asymptotic distribution of the test statistic presents  more of a challenge.
We proceed as follows: although $U_n$ is not a U-statistic, since the corresponding kernel $h$ is antisymmetric, we may apply the same idea behind the projection method used to find the asymptotic distribution of U-statistics as in the original proof due to \cite{hoeffding}, i.e. we linearize the statistic $U_n$. 
Note, however, the linearization is not based on the true ``projection" based on conditional expectations, but 
rather based on a ``pseudo-projection", which assumes the observations are i.i.d. In order to perform this linearization, we require the following definition, as well as the subsequent lemma.

\begin{defn} \rm For $\{i_j: j \in [k]\} \subset \nn$, let $\{X_{i_j}: j \in [k]\}$ be a sequence of random variables. For each $j \in \{1, \, \dots, \, k-1\}$, let $\pp_{j} ^{(k)}$ be the probability measure defined by 

$$
\pp_j ^{(k)} ( E^{(j)} \times E^{(k-j)}) = \pp( (X_{i_1}, \, \dots, \, X_{i_j}) \in E^{(j)})\pp((X_{i_{j+1}}, \, \dots, \, X_{i_k}) \in E^{(k-j)}) \, \, ,
$$

\noindent where $E^{(j)}$ and $E^{(k-j)}$ are Borel sets in $\rr^j$ and $\rr^{k-j}$, respectively. Also, let $\pp_0^k$ be the probability measure defined by 

$$
\pp_0 ^{k} (E^{(k)}) = \pp((X_{i_1}, \, \dots, \, X_{i_k}) \in E^{(k)}) \, \, , 
$$

\noindent where $E^{(k)}$ is a Borel set in $\rr^k$. 

\label{defn.indep}
\end{defn}

\begin{lem} Let $h:\rr^k \to \rr$ be a Borel function such that $|h| \leq M$, for some $M \in \rr^+$. Then 

$$
\left| \int_{\rr^k} h(x_1, \, \dots,\, x_k) \d\pp^{(k)}_0 - \int_{\rr^k} h(x_1, \, \dots, \, x_k) \d\pp_j ^{(k)} \right| \leq 2M \beta_X(i_{j+1} - i_j) \, \,.
$$

\label{lem.bound}
\end{lem}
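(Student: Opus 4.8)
The plan is to reduce the claimed bound to the definition of the $\beta$-mixing coefficient $\beta_X$ via a coupling/total-variation argument. The quantity on the left-hand side is the difference between integrating the bounded function $h$ against the true joint law $\pp_0^{(k)}$ of $(X_{i_1},\dots,X_{i_k})$ and against the law $\pp_j^{(k)}$ in which the block $(X_{i_1},\dots,X_{i_j})$ is made independent of the block $(X_{i_{j+1}},\dots,X_{i_k})$ while preserving each block's marginal law. Since $|h| \le M$, for any two probability measures $\mu,\nu$ on $\rr^k$ we have $|\int h\,\d\mu - \int h\,\d\nu| \le 2M\,\|\mu - \nu\|_{TV}$, where $\|\cdot\|_{TV}$ is the total variation distance (with the convention that makes it lie in $[0,1]$). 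So it suffices to show $\|\pp_0^{(k)} - \pp_j^{(k)}\|_{TV} \le \beta_X(i_{j+1}-i_j)$.

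The key step is to identify this total variation distance with a $\beta$-mixing coefficient between the two $\sigma$-fields $\sigma(X_{i_1},\dots,X_{i_j})$ and $\sigma(X_{i_{j+1}},\dots,X_{i_k})$. First I would recall the standard fact (see e.g. \cite{bradley2005}) that for two $\sigma$-fields $\mathcal{A},\mathcal{B}$, the absolute regularity coefficient $\beta(\mathcal{A},\mathcal{B})$ equals the total variation distance between the joint law $\pp_{(A,B)}$ of the pair and the product $\pp_A \otimes \pp_B$ of its marginals, i.e. $\beta(\mathcal{A},\mathcal{B}) = \|\pp_{(A,B)} - \pp_A\otimes\pp_B\|_{TV}$, and moreover this admits the representation $\beta(\mathcal{A},\mathcal{B}) = \ee\big[\sup_{B\in\mathcal{B}} |\pp(B\mid\mathcal{A}) - \pp(B)|\big]$ used in the paper's definition. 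Applying this with $\mathcal{A} = \sigma(X_{i_1},\dots,X_{i_j})$ and $\mathcal{B} = \sigma(X_{i_{j+1}},\dots,X_{i_k})$ gives exactly $\|\pp_0^{(k)} - \pp_j^{(k)}\|_{TV} = \beta(\mathcal{A},\mathcal{B})$. Then, because $\mathcal{A} \subseteq \mathcal{F}_{i_j}$ and $\mathcal{B} \subseteq \mathcal{G}_{i_{j+1}}$ (recalling that the indices are increasing, $i_1 < \dots < i_k$, so $i_j$ is the largest index in the first block and $i_{j+1}$ the smallest in the second), monotonicity of the supremum in the definition of $\beta_X$ yields $\beta(\mathcal{A},\mathcal{B}) \le \beta_X(i_{j+1} - i_j)$. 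Chaining these inequalities gives the lemma.

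The main obstacle — or rather the only point requiring care — is justifying the two-block reduction cleanly when $k$ and $j$ are arbitrary: one must be careful that making only the single "cut" between coordinate $j$ and coordinate $j+1$ independent (rather than decoupling all coordinates) is precisely what $\pp_j^{(k)}$ does, and that the relevant mixing quantity therefore only sees the gap $i_{j+1}-i_j$ and not any other gap. This is immediate from Definition \ref{defn.indep}, but worth stating explicitly. A secondary technical point is the $\beta$-mixing = total-variation identity; if one prefers to avoid quoting it, one can instead argue directly: condition on $\mathcal{F}_{i_j}$, write $\int h\,\d\pp_0^{(k)} - \int h \,\d\pp_j^{(k)} = \ee\big[\int_{\rr^{k-j}} h(X_{i_1},\dots,X_{i_j},y)\,\big(\mu_{X|\mathcal{F}_{i_j}} - \mu_X\big)(\d y)\big]$ where $\mu_{X|\mathcal{F}_{i_j}}$ and $\mu_X$ are the conditional and unconditional laws of $(X_{i_{j+1}},\dots,X_{i_k})$, bound the inner integral by $2M$ times the total variation of $\mu_{X|\mathcal{F}_{i_j}} - \mu_X$, and then bound that total variation (in expectation) by $\sup_{B\in\mathcal{G}_{i_{j+1}}}|\pp(B\mid\mathcal{F}_{i_j}) - \pp(B)|$ in expectation, which is at most $\beta_X(i_{j+1}-i_j)$ by definition — no need to even halve, giving the stated constant $2M$ directly.
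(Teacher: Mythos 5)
Your proposal is correct and follows essentially the same route as the paper: bound the difference of integrals by $2M$ times the total variation distance between $\pp_0^{(k)}$ and $\pp_j^{(k)}$, then identify that distance with the absolute regularity coefficient of the two blocks and dominate it by $\beta_X(i_{j+1}-i_j)$. If anything you are slightly more careful than the paper, which asserts the last step as an equality (citing Rozanov) where your monotonicity argument correctly yields only the inequality $\le \beta_X(i_{j+1}-i_j)$ — which is all the lemma needs.
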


\begin{proof} Let $\nu$ be the signed measure $\pp_0 ^{(k)} - \pp_j ^{(k)}$, and let $|\nu|$ denote its corresponding total variation measure. We have that 

$$
\begin{aligned}
\left| \int_{\rr^k} h(x_1, \, \dots,\, x_k) \d\pp^{(k)}_0 - \int_{\rr^k} h(x_1, \, \dots, \, x_k) \d\pp_j ^{(k)} \right| &= \left| \int_{\rr^k} h(x_1, \, \dots,\, x_k) \d\nu \right| \\
&\leq \int_{\rr^k} |h(x_1,\,\dots,\, x_k) | \d|\nu| \\
&\leq 2M \cdot \text{TV}\left(\pp_0^{(k)}, \, \pp_j ^{(k)} \right) \\
&= 2M \beta_X(i_{j+1} - i_j) \, \, ,
\end{aligned}
$$

\noindent where TV denotes total variation distance, and the final equality follows by \cite{rozanov}. \end{proof}

We utilize this lemma as follows. Let $F$ be the marginal distribution of the $X_i$. In essence, we may not perform a true linearization, since, for $i \neq j$, the conditional expectation $\pp(X_i \leq X_j \, | \, X_j)$ is not equal to $F(X_j)$, since the $X_i$ need not be independent. In spite of this, Lemma \ref{lem.bound} provides an upper bound on the difference between the true projection 

$$
\pp(X_i \leq X_j \, |\, X_j) 
$$

\noindent and the approximate projection $F(X_j)$, in terms of the $\beta$-mixing coefficients of the sequence $\{X_n: n \in \nn\}$. By requiring appropriate $\beta$-mixing conditions on the sequence $\{X_i\}$, we may bound the error term of this approximate linearization, and, using the central limit theorem due to \cite{neumann}, we may conclude the following result.

\begin{thm} Let $\{X_n : n \in \nn\}$ be a strictly stationary, absolutely regular sequence of real-valued random variables with marginal distribution $F$, such that, for all $i\neq j$, $\pp(X_i = X_j) = 0$. Suppose that the $\beta$-mixing coefficients of $\{X_n\}$ satisfy

\beq
\sum_{n=1}^\infty \beta_X(n) < \infty \, \, .
\label{beta.finite}
\eeq

\noindent For each $i \in [n]$, let $V_i = 1 - 2F(X_i)$. Let 

\beq
\sigma^2 = \frac{4}{9} + \frac{8}{3} \sum_{k \geq 1} \cov(V_1, \, V_{1+ k} ) \, \, .
\label{lim.variance.centre}
\eeq

\noindent Suppose that $\sigma^2 > 0$. As $n \to \infty$, 

$$
\sqrt{n} U_n \overset{d}\to N(0, \, \sigma^2) \, \, .
$$

\label{thm.null.conv}
\end{thm}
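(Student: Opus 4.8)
The plan is to reduce $\sqrt{n}\,U_n$ to a linear statistic in the bounded, strictly stationary, $\beta$-mixing sequence $V_i = 1 - 2F(X_i)$, and then invoke the central limit theorem of \cite{neumann} for triangular arrays of weakly dependent random variables. Note first that the no-ties hypothesis together with $\beta$-mixing forces $F$ to be continuous --- an atom of mass $p>0$ at a point $a$ would give $\pp(X_1 = X_{1+n}) \geq \pp(X_1 = a,\, X_{1+n} = a) \to p^2 > 0$, a contradiction --- so $F(X_1) \sim \mathrm{Unif}(0,1)$, $\E[V_1] = 0$, and $\var(V_1) = 1/3$. Write the Mann-Kendall kernel as $h(x,y) = I\{y>x\} - I\{x>y\}$; under independence its coordinate projections are $V(x) = 1 - 2F(x)$ and $-V(y)$, so $h_2(x,y) := h(x,y) - V(x) + V(y)$ is a bounded residual that is doubly degenerate with respect to $F$, i.e. $\int h_2(x,y)\,dF(x) = \int h_2(x,y)\,dF(y) = 0$. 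Define the pseudo-projection (the H\'ajek projection computed as if the data were i.i.d.) $\hat U_n = \binom{n}{2}^{-1}\sum_{i<j}(V_i - V_j) = \binom{n}{2}^{-1}\sum_{i=1}^{n}(n+1-2i)\,V_i$, so that $\sqrt{n}\,U_n = \sqrt{n}\,\hat U_n + \sqrt{n}(U_n - \hat U_n)$ with $U_n - \hat U_n = \binom{n}{2}^{-1}\sum_{i<j} h_2(X_i,X_j)$.

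The first --- and hardest --- step is that the remainder is negligible, $\sqrt{n}(U_n - \hat U_n) \inp 0$, which I would prove in $L^2$. For the mean, Lemma \ref{lem.bound} applied with the product-of-marginals measure (against which $h_2$ integrates to zero) bounds $|\E[h_2(X_i,X_j)]|$ by a constant times $\beta_X(j-i)$, and summing against $\sum_n\beta_X(n)<\infty$ gives $\E[\sqrt{n}(U_n-\hat U_n)] = O(n^{-1/2})$. For the variance, expand $\var\bigl(\sqrt{n}(U_n-\hat U_n)\bigr) = n\binom{n}{2}^{-2}\sum_{i<j}\sum_{k<l}\cov\bigl(h_2(X_i,X_j),\,h_2(X_k,X_l)\bigr)$ and classify the quadruples $(i,j,k,l)$ by their coincidence pattern: the diagonal terms $\{i,j\}=\{k,l\}$ number $O\bigl(\binom{n}{2}\bigr)$ and contribute $O(n^{-1})$ by boundedness of $h_2$; for an off-diagonal quadruple, sort the distinct indices and bound the covariance by a constant times $\beta_X$ evaluated at a relevant index spacing, by applying Lemma \ref{lem.bound} at an appropriate gap and exploiting the double degeneracy of $h_2$ to cancel the leading product term. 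Summing these bounds over all off-diagonal quadruples, the $O(n^{3})$ and $O(n^{4})$ counts are beaten because the mixing coefficients are non-increasing, so that $\sum_n\beta_X(n)<\infty$ yields $\sum_{d\leq n} d^{\,j}\beta_X(d) = o(n^{\,j+1})$ for $j = 1,2$; hence the remainder variance tends to $0$.

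It then remains to prove $\sqrt{n}\,\hat U_n \ind N(0,\sigma^2)$. Writing $\sqrt{n}\,\hat U_n = \sum_{i=1}^{n} c_{n,i} V_i$ with $c_{n,i} = 2\sqrt{n}\,(n+1-2i)/(n(n-1))$, one has $\max_i|c_{n,i}| = O(n^{-1/2}) \to 0$ and $\sum_i c_{n,i}^2 = 4(n+1)/(3(n-1))$ bounded; the array $\{V_i\}$ is strictly stationary, bounded by $1$, and $\beta$-mixing with $\beta_V(n) \leq \beta_X(n)$, so the hypotheses of \cite{neumann} hold once the limiting variance is identified. A direct computation using stationarity and $\sum_{k\geq 1}|\cov(V_1,V_{1+k})| \leq 2\sum_n\beta_X(n) < \infty$ gives $\var\bigl(\sum_i c_{n,i}V_i\bigr) \to \sigma^2$: the $i=j$ terms yield $\var(V_1)\sum_i c_{n,i}^2 \to \tfrac13\cdot\tfrac43 = \tfrac49$, and the cross terms $2\sum_{k\geq 1}\cov(V_1,V_{1+k})\sum_i c_{n,i}c_{n,i+k}$ yield $\tfrac83\sum_{k\geq1}\cov(V_1,V_{1+k})$ after letting the Riemann sum $\sum_i c_{n,i}c_{n,i+k}\to\tfrac43$ (with $\int_0^1(1-2x)^2\,dx = \tfrac13$) and invoking dominated convergence in $k$ via $|\sum_i c_{n,i}c_{n,i+k}| \leq \sum_i c_{n,i}^2$. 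Since $\sigma^2 > 0$ the limit is a nondegenerate Gaussian, and Slutsky's theorem combined with the negligible remainder gives $\sqrt{n}\,U_n \ind N(0,\sigma^2)$.

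The main obstacle is the off-diagonal analysis of the remainder variance: one must treat every way the two $h_2$-pairs can sit among the two, three, or four distinct indices of a quadruple, choose for each configuration the right gap at which to apply Lemma \ref{lem.bound} (and iterate it when both $h_2$-factors straddle the first split), and then check that the resulting mixing bounds, summed over the polynomially many quadruples and rescaled by $n\binom{n}{2}^{-2}$, still vanish --- which relies on the monotonicity of the mixing coefficients together with the bare summability $\sum_n\beta_X(n)<\infty$.
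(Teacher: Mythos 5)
Your proposal takes essentially the same route the paper describes for its own proof: the pseudo-projection $U_n=\binom{n}{2}^{-1}\sum_{i<j}(V_i-V_j)+\binom{n}{2}^{-1}\sum_{i<j}h_2(X_i,X_j)$ onto the linearization that would be the H\'ajek projection were the data i.i.d., control of the doubly degenerate remainder through the total-variation bound of Lemma \ref{lem.bound}, and Neumann's CLT for the weighted linear statistic $\sum_i c_{n,i}V_i$, whose limiting variance you correctly identify as $\frac{4}{9}+\frac{8}{3}\sum_{k\geq 1}\cov(V_1,V_{1+k})$; your observation that the no-ties condition plus mixing forces $F$ to be continuous is also a point worth making explicit.

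One quantitative step does not close as written. You invoke $\sum_{d\leq n}d^{j}\beta_X(d)=o(n^{j+1})$ for $j=1,2$, but the count of four-distinct-index quadruples whose maximal consecutive gap equals $d$ is $O(nd^2)$, so the bound $\sum_{d\leq n}d^{2}\beta_X(d)=o(n^{3})$ only yields $o(n^{4})$ for that block of the covariance sum, which after rescaling by $n\binom{n}{2}^{-2}\asymp n^{-3}$ is $o(n)$, not $o(1)$. What you need, and what is in fact true under $\sum_{d}\beta_X(d)<\infty$ with non-increasing coefficients, is the sharper estimate $\sum_{d\leq n}d^{j}\beta_X(d)=o(n^{j})$: summability plus monotonicity give $d\beta_X(d)\to 0$, hence $d^{j}\beta_X(d)=o(d^{j-1})$ and the Ces\`aro bound follows; the case $j=1$ is exactly the estimate recorded in Remark \ref{rem.beta.mix}. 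With that correction the three- and four-distinct-index blocks contribute $o(n^{2})$ and $o(n^{3})$ respectively, the rescaled remainder variance vanishes, and the argument is complete. The remaining burden is the one you flag yourself: justifying the covariance bound $O(\beta_X(\text{gap}))$ for every coincidence pattern, iterating Lemma \ref{lem.bound} when the split at the maximal gap straddles both kernel factors, since a single split need not reduce the expectation to a product against the marginal where the degeneracy of $h_2$ applies.
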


\begin{rem} \rm Note that, as a consequence of the proof of Theorem \ref{thm.null.conv}, we have that 

$$
\ee [\sqrt{n} U_n ] = o( 1) \, \, ,
$$

\noindent i.e. for any stationary, absolutely regular sequence satisfying the conditions laid out therein, the limiting mean of the test statistic is 0. In particular, this motivates our choice of null hypothesis $H_0^{(g)}$, since this will be the case for any stationary sequence $\{X_n: n \in \nn\}$ satisfying the requisite mixing conditions. \qed

\label{stat.no.global.trend}
\end{rem}

\begin{rem} \rm Since, for any sequence $\{X_n: n \in \nn\}$, we have that, for all $n$,

$$
\beta_X (n) \geq 2\alpha_X (n) \, \, ,
$$

\noindent it follows that the condition (\ref{beta.finite}) implies that 

$$
\sum_{n \geq 1} \alpha_X(n) < \infty \, \, .
$$

\noindent This condition is sufficient for Theorem 2.1 of \cite{neumann} to be applied in the above proof. It also follows that 

$$
\sum_{k=1} ^\infty \beta_X (k) < \infty \implies \sum_{k=1} ^n k \beta_X(k) = o(n) \, \, ,
$$

\noindent a proof of which is as follows. For $(k, \, l) \in \nn \times (\nn \cup \{\infty\})$ such that $k \leq l$, let 

$$
S_{k, \, l} = \sum_{i = k } ^l \beta_X (i) \, \, .
$$

\noindent Let $\epsilon > 0$. Without loss of generality, we may assume that $\epsilon < S_{1, \, \infty}$. There exists $N \in \nn$ such that, for all $k \geq N$, 

$$
S_{k, \, \infty} < \epsilon \, \, .
$$

\noindent We have that, for $n \geq N$, 

$$
\begin{aligned}
\sum_{k=1} ^n k \beta_X( k) &= \sum_{k=1} ^n S_{k, \, n} \\
&= \sum_{k=1}^{N-1} S_{k, \, n} + \sum_{k = N}^n S_{k, \, N} \\
&\leq (N-1) S_{1, \, \infty} + (n - N+1) \epsilon \\
&\leq 2n\epsilon \, \, ,
\end{aligned}
$$

\noindent for all $n \geq N S_{1, \, \infty}/\epsilon$. 

However, due to the above two implications, it may be seen that the condition (\ref{beta.finite}) may be replaced with the weaker conditions 

\beq
\begin{aligned}
\sum_{k= 1}^\infty \alpha_X(k) &< \infty \\
\sum_{k=1}^n k \beta_X(k) &= o(n) \, \, ,
\end{aligned}
\label{beta.finite.weaker}
\eeq

\noindent without affecting the result of Theorem \ref{thm.null.conv}. \qed

\label{rem.beta.mix}
\end{rem}

\begin{rem} \rm The proof of Theorem \ref{thm.null.conv} follows the same structure as the original proof of the limiting distribution of U-statistics found in \cite{hoeffding}. The statistic in question is split into a linearized term, formed by conditioning on one of the entries in the kernel $h$, and a remainder term. It is then shown that the remainder term converges to 0 in probability, and that the linearized term satisfies a central limit theorem.

There are two immediate issues with this approach in the case of the Mann-Kendall statistic for dependent data: firstly, the kernel $h$ is antisymmetric, hence the order of projection becomes relevant to the resulting linearized term. In the case of independent, but not necessarily i.i.d. random variables, this has been previously considered in \cite{hanasymptotics}, with an application to the Mann-Kendall statistic.

The second, and more challenging, issue is that, in the case of dependent data, the conditional expectation 

\beq
\ee[h(X_i, \, X_j) \, | \, X_j]  
\label{conditional.exp}
\eeq

\noindent depends not only on $X_j$, but also on the conditional distribution of $X_i$ given $X_j$. In particular, in the case of a strictly stationary sequence, (\ref{conditional.exp}) depends on both $X_j$ and $j- i$. This suggests that a true projection, formed by conditioning on the variables $\{X_i\}$, will not lead to a sum of $n$ random variables which is more amenable to the application of a central limit theorem.

However, as the proof of Theorem \ref{thm.null.conv} demonstrates, one may take the approach of pseudo-projection: namely, we may rewrite the full statistic as a sum of linear terms which would be obtained by projection if the data were i.i.d., in addition to a remainder term. Heuristically, the reasoning behind this approach is as follows. By Lemma \ref{lem.bound}, the difference between the true projection and pseudo-projection for any given term in the Mann-Kendall statistic is 

$$
\ee \left[ | (1- 2F(X_j)) - \ee[h(X_i, \, X_j) \, | \, X_j ] | \right] = O( \beta_X (j-i) )\, \, .
$$

\noindent By Markov's inequality, the sum of the differences is therefore ``small in probability" under the given conditions, and so a CLT may be applied to the sum of pseudo-projections without penalty. \qed
\end{rem}

While we have shown that a central limit theorem holds for the statistic $U_n$, we observe, that, in general, the limiting variance (\ref{lim.variance.centre}) of the global Mann-Kendall statistic is not, in general, equal to the limiting variance of the permutation distribution, as found in Theorem \ref{basic.perm.lim}. In particular, this implies that a permutation test based on this statistic does not control the probability of a Type 1 error, even asymptotically. We therefore turn our attention to studentization, i.e. we wish to find an appropriate estimator of the variance (\ref{lim.variance.centre}). To this end, we provide the following theorem.

\begin{thm} Let $\{X_n: n \in \nn\}$ be a sequence of random variables satisfying the conditions in Theorem \ref{thm.null.conv}. Let $\hat{F}_n$ be the empirical distribution of $\{X_n: n \in \nn\}$. Let $\{b_n: n \in \nn\} \subset \nn$ be a sequence such that $b_n < n$ for all $n$, $b_n = o(\sqrt{n})$, and, as $n \to \infty$, $b_n \to \infty$. Let 

\beq
\hat{\sigma}_n ^2 = \frac{4}{9} + \frac{8}{3n} \sum_{k=1}^{b_n} \sum_{j= 1}^{ n-k} \left(1 -2 \hat{F}_n(X_j) \right) \left(1 -  2 \hat{F}_n (X_{j + k})\right) \, \, .
\label{var.est.global}
\eeq

\noindent Let $\sigma^2$ be as in Theorem \ref{thm.null.conv}. Then, as $n\to \infty$, 

$$
\hat{\sigma}_n^2 \overset{p} \to \sigma^2 \, \, .
$$

\label{thm.consistency.var}
\end{thm}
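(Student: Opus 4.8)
The plan is to treat $\hat\sigma_n^2$ as what it is: a truncated (rectangular) lag-window estimator of a long-run variance, for the auxiliary sequence $V_i \defeq 1-2F(X_i)$. Write $\hat V_i \defeq 1-2\hat F_n(X_i)$ and $\gamma(k)\defeq\cov(V_1,V_{1+k})$. Since $F$ is continuous, $F(X_1)$ is uniform on $[0,1]$, so $\ee[V_1]=0$, $\gamma(0)=\var(V_1)=1/3$, and $\gamma(k)=\ee[V_1V_{1+k}]$ for all $k\ge0$; this is also why $\sigma^2=\tfrac43\big(\gamma(0)+2\sum_{k\ge1}\gamma(k)\big)$. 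Setting
$$
Z_n=\frac1n\sum_{k=1}^{b_n}\sum_{j=1}^{n-k}V_jV_{j+k},\qquad \hat Z_n=\frac1n\sum_{k=1}^{b_n}\sum_{j=1}^{n-k}\hat V_j\hat V_{j+k},
$$
we have $\hat\sigma_n^2=\tfrac49+\tfrac83\hat Z_n$, so it suffices to show $\hat Z_n\inp\sum_{k\ge1}\gamma(k)$. The argument has three steps: (i) replace $\hat V$ by $V$; (ii) compute $\ee[Z_n]$; (iii) bound $\var(Z_n)$.

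For (i), let $\Delta_n=\sup_{x\in\rr}|\hat F_n(x)-F(x)|$. Since $|\hat V_i-V_i|\le2\Delta_n$ and $|\hat V_i|,|V_i|\le1$, the inequality $|\hat a\hat b-ab|\le|\hat a|\,|\hat b-b|+|b|\,|\hat a-a|$ yields $|\hat Z_n-Z_n|\le\frac1n\cdot b_n\cdot n\cdot4\Delta_n=4b_n\Delta_n$. For a stationary absolutely regular sequence with $\sum_n\beta_X(n)<\infty$ the empirical process $\sqrt n(\hat F_n-F)$ converges weakly in $\ell^\infty(\rr)$ (Doukhan--Massart--Rio; see also Rio's monograph), so $\Delta_n=O_p(n^{-1/2})$ and hence $b_n\Delta_n=o_p(1)$, using $b_n=o(\sqrt n)$. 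For (ii), stationarity gives $\ee[Z_n]=\frac1n\sum_{k=1}^{b_n}(n-k)\gamma(k)=\sum_{k=1}^{b_n}\gamma(k)-\frac1n\sum_{k=1}^{b_n}k\gamma(k)$; the covariance inequality for mixing sequences together with $|V_i|\le1$ gives $|\gamma(k)|\le4\alpha_X(k)\le2\beta_X(k)$, so $\Gamma\defeq\sum_{k\ge1}|\gamma(k)|<\infty$, whence $\sum_{k=1}^{b_n}\gamma(k)\to\sum_{k\ge1}\gamma(k)$ and $\big|\frac1n\sum_{k=1}^{b_n}k\gamma(k)\big|\le\frac{b_n}{n}\Gamma\to0$. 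Thus $\ee[Z_n]\to\sum_{k\ge1}\gamma(k)$.

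Step (iii) is the heart of the proof. Summing over $j$ first,
$$
S_n\defeq nZ_n=\sum_{k=1}^{b_n}\sum_{j=1}^{n-k}V_jV_{j+k}=\sum_{j=1}^{n-1}V_jW_j,\qquad W_j\defeq\sum_{k=1}^{b_n\wedge(n-j)}V_{j+k}.
$$
Each $V_jW_j$ is a measurable function of $(X_j,\dots,X_{j+b_n})$, satisfies $|V_jW_j|\le b_n$, and, because $\ee[W_j]=0$ and $\var\big(\sum_{i=1}^mV_i\big)\le m(\gamma(0)+2\Gamma)$ (the autocovariances of $V$ being summable), satisfies $\var(V_jW_j)\le\ee[W_j^2]\le Cb_n$ with $C=\gamma(0)+2\Gamma$. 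If $|j-j'|>b_n$, then $V_jW_j$ and $V_{j'}W_{j'}$ are measurable with respect to $\sigma$-fields separated by at least $|j-j'|-b_n$ time units, so $|\cov(V_jW_j,V_{j'}W_{j'})|\le4b_n^2\alpha_X(|j-j'|-b_n)$; if $|j-j'|\le b_n$, Cauchy--Schwarz and the variance bound give $|\cov(V_jW_j,V_{j'}W_{j'})|\le Cb_n$. Therefore
$$
\var(S_n)\le\sum_{|j-j'|\le b_n}Cb_n+\sum_{|j-j'|>b_n}4b_n^2\alpha_X(|j-j'|-b_n)\le 3Cnb_n^2+8nb_n^2\sum_{m\ge1}\alpha_X(m)=O(nb_n^2),
$$
since $\sum_m\alpha_X(m)\le\tfrac12\sum_m\beta_X(m)<\infty$. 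Hence $\var(Z_n)=\var(S_n)/n^2=O(b_n^2/n)\to0$, again precisely because $b_n=o(\sqrt n)$. Combining (ii) and (iii) gives $Z_n\inp\sum_{k\ge1}\gamma(k)$; with (i), $\hat Z_n\inp\sum_{k\ge1}\gamma(k)$, and so $\hat\sigma_n^2=\tfrac49+\tfrac83\hat Z_n\inp\tfrac49+\tfrac83\sum_{k\ge1}\gamma(k)=\sigma^2$. (Note that only $\sum_k\alpha_X(k)<\infty$ was actually used, consistent with the weakened hypotheses of Remark~\ref{rem.beta.mix}.)

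The main obstacle is the variance bound in Step (iii). A brute-force estimate of $\var(S_n)$ over the four indices $(j,j',k,k')$---bounding covariances of non-interleaving two-point blocks by mixing coefficients and overlapping ones by a constant---only gives $\var(S_n)=O(nb_n^3)$, which would force the stronger bandwidth restriction $b_n=o(n^{1/3})$. The key is to reorganize $S_n$ as $\sum_jV_jW_j$, so that each summand is a bounded functional of a single length-$b_n$ block \emph{whose variance is only $O(b_n)$}; it is summability of the autocovariances of $\{V_i\}$ that supplies this $O(b_n)$ (rather than $O(b_n^2)$) bound, and exactly this gain brings the requirement down to $b_n=o(\sqrt n)$---matching both the hypothesis of the theorem and the rate needed to make the empirical-process error in Step (i) negligible.
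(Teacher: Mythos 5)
Your proof is correct and follows essentially the route the paper takes for this result: replace $\hat F_n$ by $F$ at cost $O_p(b_n \sup_x|\hat F_n(x)-F(x)|)=o_p(1)$, then prove consistency of the truncated lag-window estimator for $V_i=1-2F(X_i)$ via the bias computation ($|\gamma(k)|\le 2\beta_X(k)$ summable, so $\tfrac1n\sum_k k\gamma(k)\to0$) and a covariance bound giving $\var(nZ_n)=O(nb_n^2)$, which is exactly where $b_n=o(\sqrt n)$ enters. The one inaccuracy is your closing parenthetical: Step (i) needs the $O_p(n^{-1/2})$ rate for $\sup_x|\hat F_n(x)-F(x)|$, which you obtained from the empirical-process CLT under absolute regularity with $\sum_k\beta_X(k)<\infty$, so it is not the case that only $\sum_k\alpha_X(k)<\infty$ was used.
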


Having constructed a consistent estimator of the variance, we may now studentize  to obtain the following result.

\begin{thm} Let $\{X_n: n \in \nn\}$ be a strictly stationary, absolutely regular sequence of random variables, with marginal distribution $F$, such that, for all $i \neq j$, $\pp(X_i = X_j) = 0$. Suppose that the $\beta$-mixing coefficients of $\{X_n\}$ satisfy 

$$
\sum_{n=1} ^\infty \beta_X (n) < \infty \, \, .
$$

\noindent For each $i \in \nn$, let $V_i = 1- 2F(X_i)$. Let $\sigma^2$ be as in (\ref{lim.variance.centre}), and suppose that $\sigma^2 > 0$. Let $\{b_n: n \in \nn\} \subset \nn$ be a sequence such that $b_n < n$ for all $n$, $b_n = o(\sqrt{n})$, and, as $n \to \infty$, $b_n \to \infty$. Let 

$$
\hat{\sigma}_n ^2 = \frac{4}{9} + \frac{8}{3n} \sum_{k=1}^{b_n} \sum_{j= 1}^{ n-k} \left(1 -2 \hat{F}_n(X_j) \right) \left(1 -  2 \hat{F}_n (X_{j + k})\right) \, \, .
$$

\noindent The following results hold.

\begin{enumerate}

\item[(i)] As $n \to \infty$, 

$$
\frac{\sqrt{n} U_n}{\hat{\sigma}_n} \overset{d}\to N(0, \, 1) \, \, .
$$

\item[(ii)] Let $\hat{R}_n$ be the permutation distribution, with associated group of transformations $S_n$, the symmetric group of order $n$, based on the test statistic $\sqrt{n}U_n/\hat{\sigma}_n$. Then, as $n \to \infty$, 

$$
\sup_{t \in \rr} \left| \hat{R}_n (t) - \Phi (t) \right| \overset{a.s.}\to 0 \, \, ,
$$

\noindent where $\Phi$ is the standard Gaussian c.d.f. 

\end{enumerate}

\label{thm.perm.test}
\end{thm}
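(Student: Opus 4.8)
The plan is to read off part~(i) from the two preceding theorems and then bootstrap part~(ii) from part~(i) together with Theorem~\ref{basic.perm.lim}. For~(i): Theorem~\ref{thm.null.conv} gives $\sqrt{n}U_n\overset{d}\to N(0,\sigma^2)$ and Theorem~\ref{thm.consistency.var} gives $\hat\sigma_n^2\overset{p}\to\sigma^2>0$, whence $\hat\sigma_n\overset{p}\to\sigma$ and Slutsky's theorem yields $\sqrt{n}U_n/\hat\sigma_n\overset{d}\to N(0,1)$.

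For~(ii), observe first that $U_n$ and $\hat\sigma_n^2$ are both functions of the ranks of $X_1,\dots,X_n$ alone --- for $\hat\sigma_n^2$ this is because, absent ties, $\hat F_n(X_j)$ equals the rank of $X_j$ divided by $n$ --- so on the probability-one event of no ties the permutation distribution $\hat R_n$ of $\sqrt{n}U_n/\hat\sigma_n$ is \emph{non-random}: it is the law of $\sqrt{n}U_{\Pi_n}/\hat\sigma_{\Pi_n}$ where $\Pi_n\sim\text{Unif}(S_n)$ permutes the ranks and $U_{\Pi_n},\hat\sigma_{\Pi_n}^2$ denote the permuted statistics. Hence the almost-sure convergence in the conclusion is automatic once $\sup_{t\in\rr}|\hat R_n(t)-\Phi(t)|\to 0$ is shown deterministically. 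By Theorem~\ref{basic.perm.lim} (equivalently, the Mann-Kendall central limit theorem), $\sqrt{n}U_{\Pi_n}\overset{d}\to N(0,4/9)$. The crux of the proof is the claim that $\hat\sigma_{\Pi_n}^2\overset{p}\to\tfrac49$ under $\pp_{\Pi_n}$; granting it, Slutsky's theorem gives $\sqrt{n}U_{\Pi_n}/\hat\sigma_{\Pi_n}\overset{d}\to N(0,1)$, and, $\Phi$ being continuous, this upgrades to the required uniform convergence of c.d.f.'s by P\'olya's theorem.

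To prove the claim, write $W_j=1-2\hat F_n(X_j)$, which is permutation-invariant; then
$$
\hat\sigma_{\Pi_n}^2=\frac49+\frac{8}{3n}\sum_{k=1}^{b_n}T_k,\qquad T_k:=\sum_{j=1}^{n-k}W_{\Pi_n(j)}W_{\Pi_n(j+k)} .
$$
Absent ties, $W_1,\dots,W_n$ is a permutation of $\{1-2r/n:1\le r\le n\}$, so its empirical power sums are either exact and data-free ($\sum_jW_j=-1$, $\sum_jW_j^2=\tfrac n3+O(\tfrac1n)$) or bounded in absolute value by $n$. Since $\pp_{\Pi_n}(\Pi_n(j)=p,\Pi_n(j+k)=q)=\tfrac1{n(n-1)}$ for $p\neq q$, one computes $\ee_{\Pi_n}[T_k]=(n-k)\dfrac{(\sum_jW_j)^2-\sum_jW_j^2}{n(n-1)}=-\tfrac13+O(\tfrac1n)$ uniformly in $k$, so $\tfrac{8}{3n}\sum_{k=1}^{b_n}\ee_{\Pi_n}[T_k]=O(b_n/n)\to 0$. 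For the fluctuations I would expand $\ee_{\Pi_n}[T_k^2]$ over the joint law of the quadruple $(\Pi_n(j),\Pi_n(j+k),\Pi_n(j'),\Pi_n(j'+k))$, splitting the double sum over $(j,j')$ by how many of the four indices $j,j+k,j',j'+k$ coincide: the diagonal $j'=j$ contributes $(n-k)\dfrac{(\sum_jW_j^2)^2-\sum_jW_j^4}{n(n-1)}=O(n)$, and the ``near-diagonal'' and ``generic'' contributions are each $O(n)$ as well, so $\mathrm{Var}_{\Pi_n}(T_k)\le Cn$ with $C$ an absolute constant, uniformly in $k\in\{1,\dots,n-1\}$. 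By Minkowski's inequality,
$$
\mathrm{Var}_{\Pi_n}\!\Big(\tfrac{8}{3n}\sum_{k=1}^{b_n}T_k\Big)\ \le\ \frac{64}{9n^2}\Big(\sum_{k=1}^{b_n}\sqrt{\mathrm{Var}_{\Pi_n}(T_k)}\Big)^{2}\ \le\ \frac{64C}{9}\cdot\frac{b_n^{2}}{n}\ \longrightarrow\ 0 ,
$$
using $b_n=o(\sqrt n)$, and Chebyshev's inequality under $\pp_{\Pi_n}$ then gives $\tfrac{8}{3n}\sum_{k=1}^{b_n}T_k\overset{p}\to 0$, i.e.\ $\hat\sigma_{\Pi_n}^2\overset{p}\to\tfrac49$.

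The one genuinely technical step is the uniform variance bound $\mathrm{Var}_{\Pi_n}(T_k)=O(n)$: it is a fourth-moment computation over a uniformly random permutation in which one must enumerate the coincidence patterns among the four indices, track the falling-factorial normalizations $\tfrac1{n(n-1)\cdots(n-m+1)}$, and check that the resulting constants depend only on the universally bounded power sums of $W_1,\dots,W_n$ --- not on $k$ or on the data. Everything else is routine. Note that the two conditions on $b_n$ play complementary roles: $b_n=o(\sqrt n)$ is exactly what makes $b_n^2/n\to0$ above (and in Theorem~\ref{thm.consistency.var}), while $b_n\to\infty$ is needed only for the unconditional limit $\hat\sigma_n^2\overset{p}\to\sigma^2$, which must capture the full series $\sum_{k\ge1}\cov(V_1,V_{1+k})$.
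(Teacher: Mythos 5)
Your part (i) is exactly the paper's proof: Theorems \ref{thm.null.conv} and \ref{thm.consistency.var} plus Slutsky. For part (ii) you take a genuinely different route, and it is correct. The paper argues via the bivariate Hoeffding condition: it introduces two independent uniform permutations $\Pi_n,\Pi_n'$, notes that $\sqrt{n}U_n/\hat{\sigma}_n$ is a rank statistic so that $\sqrt{n}U_{\Pi_n}/\hat{\sigma}_{\Pi_n}$ and $\sqrt{n}U_{\Pi_n'}/\hat{\sigma}_{\Pi_n'}$ are i.i.d.\ with the same law as the statistic computed from an i.i.d.\ sample from $F$, applies part (i) in that i.i.d.\ case (where $\sigma^2=4/9$) to obtain joint convergence to $N(0,I_2)$, and concludes by Theorem 15.2.3 of \cite{tsh}. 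You instead observe that, absent ties, the permutation distribution of a rank statistic is non-random, so the almost-sure claim reduces to a deterministic limit; you then prove $\hat{\sigma}_{\Pi_n}^2\overset{p}\to 4/9$ under the permutation measure by a direct first- and second-moment computation over the uniform random permutation, and finish with Slutsky and P\'olya. Your opening observation is clean (the paper records it only as a remark after the theorem), and your combinatorics are right in the essentials: the diagonal $j=j'$ dominates and gives $\mathrm{Var}_{\Pi_n}(T_k)=O(n)$ uniformly in $k$, and $b_n=o(\sqrt{n})$ is precisely what makes $b_n^2/n\to 0$. The one criticism is that the ``genuinely technical step'' you flag is avoidable: since $\hat{\sigma}_n^2$ is itself a rank statistic, its permutation distribution coincides with its distribution under i.i.d.\ sampling from $F$, where Theorem \ref{thm.consistency.var} (with all covariance terms zero) already yields $\hat{\sigma}_n^2\overset{p}\to 4/9$; this identification of the entire permutation law with the i.i.d.\ null law is what lets the paper reuse previously proved results with no new computation. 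What your version buys in exchange is a self-contained, quantitative verification that makes explicit where each hypothesis on $b_n$ enters.
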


\begin{proof} The result of (i) follows immediately from Theorem \ref{thm.null.conv}, Theorem \ref{thm.consistency.var}, and Slutsky's theorem. We turn our attention to (ii). 

Let $\Pi_n, \, \Pi_n' \sim \text{Unif}(S_n)$, with $\Pi_n$, $\Pi_n'$, and $(X_1, \, \dots, \, X_n)$ independent, and let $\mathbf{X}_{\Pi_n}$, $\mathbf{X}_{\Pi_n'}$ denote the action of $\Pi_n$ and $\Pi_n'$ on $(X_1, \, \dots, \, X_n)$, respectively. Since $F$ is continuous, the ranks of the sequences $\mathbf{X}_{\Pi_n}$ and $\mathbf{X}_{\Pi_n'}$ are uniformly distributed over the set of permutations of $[n]$ with probability 1. Also, since $\Pi_n$ and $\Pi_n'$ are independent, the ranks of $\mathbf{X}_{\Pi_n}$ and $\mathbf{X}_{\Pi_n'}$ are independent. Furthermore, note that, for each $i \in [n]$, 

$$
\hat{F}_{\Pi_n}(X_{\Pi_n(i)}) = \frac{1}{n} r(X_{\Pi_n(i)}) \, \, ,
$$

\noindent for $r(X_{\Pi_n(i)})$ the rank of $X_{\Pi_n(i)}$, i.e. $\hat{\sigma}_{\Pi_n}$ is also a rank statistic. In particular, it follows that, for 

$$
U_{\Pi_n} = U_n (\mathbf{X}_{\Pi_n}) \, \, ,
$$

\noindent and for $U_{\Pi_n '}$ defined analogously, $U_{\Pi_n}/\hat{\sigma}_{\Pi_n}$ and $U_{\Pi_n'}/\hat{\sigma}_{\Pi_n'}$ are i.i.d. random variables, each having the same distribution as 

$$
\frac{U_n(\tilde{X}_1, \, \dots, \, \tilde{X}_n)}{\hat{\sigma}_n (\tilde{X}_1, \, \dots, \, \tilde{X}_n) } \, \, ,
$$

\noindent where $\{ \tilde{X}_i: i \in [n]\}$ is a sequence of i.i.d. random variables, each having the marginal distribution $F$. It follows that, by (i), as $n \to \infty$, 

$$
\left( \frac{\sqrt{n}U_{\Pi_n}}{\hat{\sigma}_{\Pi_n} }, \, \frac{\sqrt{n}U_{\Pi_n'}}{\hat{\sigma}_{\Pi_n'} }\right) \overset{d}\to N(0, \, I_2) \, \, ,
$$

\noindent where $I_2$ is the $2 \times 2$ identity matrix. Therefore, by Theorem 15.2.3 of \cite{tsh}, the result of (ii) follows. \end{proof}

\begin{rem} \rm As a consequence of Theorem \ref{thm.perm.test}, a two-sided permutation test of the null hypothesis 

$$
H_0^{(g)}: \{X_n: n \in \nn\} \text{ is stationary,}
$$

\noindent which rejects for large values of $\sqrt{n}{U_n}/\hat{\sigma}_n$, is asymptotically valid under the stated conditions. \qed

\end{rem}

\begin{rem} \rm Since $\sqrt{n}U_n/\hat{\sigma}_n$ is a rank statistic, it follows that the permutation distribution $\hat{R}_n$ has no dependence on the underlying distribution of the $X_i$, as long as the marginal distribution $F$ is continuous. In particular, it follows that $\hat{R}_n$ retains a convenient property of the original null distribution of the Mann-Kendall test: namely, for a fixed choice of $n$ and the truncation parameter $b_n$ of the studentization factor, the permutation distribution is \textit{fixed}, and so may be computed in advance and tabulated. \qed 
\end{rem}

Having shown the asymptotic validity of the permutation test based on the studentized global Mann-Kendall statistic, we turn our attention to finding the local limiting power function of this test.

\begin{thm} Let $\{X_i: i \in [n]\}$ be a sequence of random variables satisfying the conditions of Theorem \ref{thm.null.conv}.  Suppose further that $F$ is twice differentiable, with density $f$ with respect to Lebesgue measure, such that $f$ has uniformly bounded first derivative $f'$, which is measurable (with respect to Lebesgue measure). Let $\{\mu_{n, \,i}: i \in [n]\} \subset \rr$ be such that, as $n \to \infty$,

\beq
\begin{aligned}
\frac{1}{\sqrt{n}} \sum_{i=1} ^n \mu_{n, \, i} ^2 &\to 0\, \, .
\end{aligned}
\label{alternative.mu.cond}
\eeq

\noindent Let 

$$
\nu_n = \frac{1}{\sqrt{n}}\sum_{i=1}^n \frac{n +1 -2i}{n-1} \mu_{n, \,i} \, \, .
$$

\noindent For each $i \in [n]$, let 

$$
Y_{n, \, i} = X_i + \mu_{n, \, i} \, \, .
$$ 

\noindent Let $\pp_n$, $\qq_n$ be the measures such that, for $A \in \mathcal{B}\left(\rr^n\right)$, 

$$
\begin{aligned}
\pp_n (A) &= \pp( (X_1, \, \dots, \, X_n) \in A) \\
\qq_n (A) &= \pp( (Y_{n, \,1}, \, \dots, \, Y_{n, \,n}) \in A) \, \, .
\end{aligned}
$$

\noindent Suppose that $\{\qq_n: n \in \nn\}$ is contiguous to $\{\pp_n: n \in \nn\}$. Then, under $\qq_n$, as $n\to \infty$,

$$
\sqrt{n}U_n + \nu_n \overset{d} \to N(0, \, \sigma^2) \, \, , 
$$

\noindent where $\sigma^2$ is as defined in Theorem \ref{thm.null.conv}.

\label{alternative.contig.lim} 
\end{thm}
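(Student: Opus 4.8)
The plan is to obtain, under $\qq_n$, an asymptotically linear representation of the Mann--Kendall statistic whose linear part is exactly the ``pseudo-projection'' that governs the null behaviour in the proof of Theorem \ref{thm.null.conv}:
\[
\sqrt n\,U_n \;=\; \frac{2}{\sqrt n}\sum_{k=1}^{n}\frac{n+1-2k}{n-1}\bigl(1-2F(X_k)\bigr)\;-\;\nu_n\;+\;o_{\pp}(1)
\qquad\text{under }\qq_n .
\]
Granting this, the theorem follows at once: the displayed linear term is a functional of the stationary, absolutely regular sequence $\{X_i\}$ and converges to $N(0,\sigma^2)$ by Theorem \ref{thm.null.conv}; contiguity of $\{\qq_n\}$ to $\{\pp_n\}$ guarantees that a remainder which is $o_{\pp}(1)$ along $\{X_i\}$ remains $o_{\pp}(1)$ under the shifted law, so Slutsky's theorem delivers $\sqrt n\,U_n+\nu_n\ind N(0,\sigma^2)$. (Equivalently, since $Y_{n,i}=X_i+\mu_{n,i}$ is a deterministic shift, one may work on the probability space carrying $\{X_i\}\sim\pp$, note that $U_n(Y_{n,1},\dots,Y_{n,n})$ has the same law there as $U_n$ under $\qq_n$, and prove $\sqrt n(U_n(Y)-U_n(X))+\nu_n\inp 0$ directly, with contiguity serving only as a safeguard.)

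To establish the representation I would mimic the pseudo-projection argument of Theorem \ref{thm.null.conv}, now applied to $U_n(Y_{n,1},\dots,Y_{n,n})$: this is legitimate because $\{Y_{n,i}\}$ has marginals $F(\cdot-\mu_{n,i})$ and inherits the $\beta$-mixing coefficients $\beta_X(\cdot)$ of $\{X_i\}$, so Lemma \ref{lem.bound} together with $\sum_n\beta_X(n)<\infty$ still controls the pseudo-projection error and forces the linearization remainder, multiplied by $\sqrt n$, to be $o_{\pp}(1)$. The genuinely new computation is the deterministic drift. With $\xi_{ij}:=\bigl(I\{Y_{n,j}>Y_{n,i}\}-I\{Y_{n,i}>Y_{n,j}\}\bigr)-\bigl(I\{X_j>X_i\}-I\{X_i>X_j\}\bigr)$ one has
\[
\sqrt n\,\E\bigl[U_n(Y)-U_n(X)\bigr]
 = 2\sqrt n\binom{n}{2}^{-1}\sum_{i<j}\Bigl(\pp\bigl(X_j-X_i>\mu_{n,i}-\mu_{n,j}\bigr)-\pp\bigl(X_j-X_i>0\bigr)\Bigr),
\]
and a first--order Taylor expansion of $t\mapsto\pp(X_j-X_i>t)$ at $t=0$ --- valid because $F$ twice differentiable with bounded $f'$ makes each pairwise-difference law have a bounded, Lipschitz density --- combined with the identity $\sum_{i<j}(\mu_{n,i}-\mu_{n,j})=\sum_k(n+1-2k)\mu_{n,k}$ (which is precisely what makes the weights $\tfrac{n+1-2k}{n-1}$ of $\nu_n$ appear) identifies the leading term with $-\nu_n$, the error being $O\bigl(\sqrt n\binom{n}{2}^{-1}\sum_{i<j}(\mu_{n,i}-\mu_{n,j})^2\bigr)=O\bigl(n^{-1/2}\sum_i\mu_{n,i}^2\bigr)=o(1)$ by \eqref{alternative.mu.cond}; note $\sqrt n\,\E[U_n(X)]=o(1)$ by Remark \ref{stat.no.global.trend}.

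The main obstacle is showing the centred fluctuation is negligible, i.e.\ $\sqrt n\bigl(U_n(Y)-U_n(X)-\E[U_n(Y)-U_n(X)]\bigr)\inp 0$, equivalently $\var\bigl(\sum_{i<j}\xi_{ij}\bigr)=o(n^2)$. Each $\xi_{ij}\in\{-2,0,2\}$ is ``rare'': $\xi_{ij}\neq0$ only when $X_j-X_i$ falls in an interval of length $|\mu_{n,i}-\mu_{n,j}|$, an event of probability $O(|\mu_{n,i}-\mu_{n,j}|)$ by the smoothness of $F$. The crucial point is that pairwise covariances are \emph{second order} in the shifts: conditioning on the shared coordinate gives $|\cov(\xi_{ij},\xi_{ik})|\lesssim|\mu_{n,i}-\mu_{n,j}|\,|\mu_{n,i}-\mu_{n,k}|$; for disjoint, temporally well-separated index sets the covariance is instead governed by $\beta_X(\cdot)$ through a covariance inequality for absolutely regular sequences (in the spirit of Lemma \ref{lem.bound}), so $\sum_n\beta_X(n)<\infty$ keeps that aggregate small, and the remaining interleaved/nearby configurations are again absorbed by the rarity bound. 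Summing these estimates over the $O(n^4)$ pairs of pairs, and using that $\binom{n}{2}^{-2}$ is of order $n^{-4}$, yields $\var\bigl(\sqrt n(U_n(Y)-U_n(X))\bigr)=O\bigl(n^{-1}\sum_i\mu_{n,i}^2\bigr)=o(1)$ --- exactly the scaling that the hypothesis $n^{-1/2}\sum_i\mu_{n,i}^2\to0$ is tailored to provide. Reconciling the ``rare-event'' and ``mixing'' regimes carefully enough to make this variance estimate go through is the step I expect to require the most work; assembling it with the drift computation and the linearization remainder then gives the expansion, whence $\sqrt n\,U_n+\nu_n\ind N(0,\sigma^2)$ under $\qq_n$.
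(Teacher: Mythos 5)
Your overall architecture --- realize $\qq_n$ by evaluating $U_n$ at $Y_{n,i}=X_i+\mu_{n,i}$ on the space carrying $\{X_i\}\sim\pp$, split $\sqrt n\,U_n(Y)$ into the null statistic plus a deterministic drift plus a negligible fluctuation, and finish with Theorem \ref{thm.null.conv} --- is the natural one, and using contiguity to transport an $o_{\pp}(1)$ linearization remainder to the alternative is the right device. The genuine gap is in the drift computation, the one step you treat as routine. The first-order Taylor term of $t\mapsto\pp(X_j-X_i>t)$ at $t=0$ is $-g_{j-i}(0)\,t$, where $g_d$ denotes the density of $X_{i+d}-X_i$; this factor does not cancel, and your identity $\sum_{i<j}(\mu_{n,i}-\mu_{n,j})=\sum_k(n+1-2k)\mu_{n,k}$ can only be invoked after all the $g_{j-i}(0)$ have been replaced by a single constant. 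Even in the i.i.d.\ case that constant is $\int f^2=\ee[f(X_1)]$, so the calculation you outline yields $\sqrt n\,\ee[U_n(Y)-U_n(X)]=-4\bigl(\int f^2\bigr)\nu_n+o(1)$, not $-\nu_n+o(1)$; for dependent data one must additionally argue, via mixing, that the weighted average of the $g_d(0)$ collapses to $\int f^2$. The same factor appears if one linearizes first and expands $F(X_k+\mu_{n,k})=F(X_k)+f(X_k)\mu_{n,k}+O(\|f'\|_\infty\mu_{n,k}^2)$: the linear-in-$\mu$ term is $-\tfrac{4}{\sqrt n}\sum_k\tfrac{n+1-2k}{n-1}f(X_k)\mu_{n,k}$, which concentrates at $-4\ee[f(X_1)]\nu_n$. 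As a concrete check, for i.i.d.\ $N(0,1)$ data with $\mu_{n,i}=hi/n^{3/2}$ one computes directly that $\sqrt n\,\ee_{\qq_n}[U_n]=h(n+1)/(3n\sqrt{\pi})\to h/(3\sqrt{\pi})$, whereas $-\nu_n\to h/6$. So the assertion that the Taylor expansion ``identifies the leading term with $-\nu_n$'' is not a consequence of the argument you give: the centering your method actually produces carries the factor $4\int f^2$ (and this tension with the displayed conclusion needs to be resolved, not elided).

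Two further remarks. First, your treatment of the centred fluctuation $\var\bigl(\sum_{i<j}\xi_{ij}\bigr)=o(n^3)$ is only a plan: the $O(n^4)$ disjoint pairs are exactly where neither the ``rarity'' bound nor the mixing bound alone suffices (Cauchy--Schwarz on the rarity bound gives a sum that is too large, and mixing alone ignores that each $\xi_{ij}$ is small), and the interpolation you gesture at is the hardest part of your route. Second, there is a cleaner path that avoids this bookkeeping entirely and uses the hypotheses as they are stated: the null linearization $\sqrt n\,U_n=\tfrac{2}{\sqrt n}\sum_k\tfrac{n+1-2k}{n-1}\bigl(1-2F(\cdot_k)\bigr)+o_{\pp}(1)$ holds under $\pp_n$ by the proof of Theorem \ref{thm.null.conv}, hence under $\qq_n$ by contiguity (this is the real job of the contiguity assumption, not a ``safeguard''); one then Taylor-expands only the single sum of $n$ terms $F(X_k+\mu_{n,k})$, with the quadratic remainder controlled by $\|f'\|_\infty$ and condition (\ref{alternative.mu.cond}), and the linear term controlled in $L^2$ by boundedness of $f$ and summable mixing. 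I would recommend reworking the proof along those lines, at which point the correct constant in front of $\nu_n$ emerges unambiguously.
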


\begin{rem}

\rm Note that the condition on $f'$ implies that $f$ is uniformly bounded. Indeed, suppose the contrary. Let $\lVert f' \rVert_\infty = k < \infty$. Then, for $N  > \sqrt{\frac{1}{2k} }$, for $x$ such that $f(x) \geq N + 1$,  

$$
\begin{aligned}
\int_x ^ {x + 2/N} f(t) \d t&\geq \int_x ^{x + 2/N}( f(x) -  k t) \d t \\
&= \frac{2f(x)}{N } - k \cdot \frac{2}{N^2} \\
&> 2 + \frac{2}{N} - 1 \\
&> 1 \,\, ,
\end{aligned} 
$$

\noindent i.e. we have obtained a contradiction. Note also that this implies that the random variable $f(X_1)$ is bounded, and so, in particular, has finite moments of every order. \qed

\label{f.is.bounded}
\end{rem}

\begin{cor} Under the conditions of Theorem \ref{alternative.contig.lim}, for $\hat{\sigma}_n ^2$ as defined in Theorem \ref{thm.consistency.var}, we have that, under $\qq_n$,

$$
\frac{\sqrt{n} U_n + \nu_n}{\hat{\sigma}_n} \overset{d} \to N(0, \, 1) \, \, .
$$

\label{cor.student.ctgs}
\end{cor}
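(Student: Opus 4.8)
The plan is to obtain the numerator's limit from Theorem \ref{alternative.contig.lim} and the denominator's limit from Theorem \ref{thm.consistency.var}, the latter transported from $\pp_n$ to $\qq_n$ by contiguity, and then to conclude by Slutsky's theorem. The point worth stressing is that one does \emph{not} verify any stationarity or mixing property of the perturbed array $(Y_{n,1},\dots,Y_{n,n})$: the consistency of $\hat\sigma_n^2$ is established for $\{X_n\}$ and then simply carried over.

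First I would regard $\hat\sigma_n^2$ as a fixed Borel function $g_n:\rr^n\to\rr$ of the observed coordinates, so that $g_n-\sigma^2$ is a single measurable functional whose law under $\pp_n$ is that of the studentization factor formed from $(X_1,\dots,X_n)$, and whose law under $\qq_n$ is that formed from $(Y_{n,1},\dots,Y_{n,n})$. Since the hypotheses of Theorem \ref{alternative.contig.lim} include those of Theorem \ref{thm.null.conv}, Theorem \ref{thm.consistency.var} applies to $\{X_n\}$ and yields $\pp_n\big(|g_n-\sigma^2|>\epsilon\big)\to0$ for every $\epsilon>0$. Now invoke Le Cam's first lemma (equivalently, the defining property of contiguity): as $\{\qq_n\}$ is contiguous to $\{\pp_n\}$, any sequence of events in $\mathcal B(\rr^n)$ whose $\pp_n$-probability tends to $0$ also has $\qq_n$-probability tending to $0$; applying this to $\{|g_n-\sigma^2|>\epsilon\}$ gives $\hat\sigma_n^2\overset p\to\sigma^2$ under $\qq_n$. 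Because $\sigma^2>0$, with $\qq_n$-probability tending to $1$ the quantity $\hat\sigma_n=\sqrt{\hat\sigma_n^2}$ is well-defined, and the continuous mapping theorem gives $\hat\sigma_n\overset p\to\sigma$ under $\qq_n$.

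Finally, Theorem \ref{alternative.contig.lim} provides $\sqrt{n}U_n+\nu_n\overset d\to N(0,\sigma^2)$ under $\qq_n$, so Slutsky's theorem yields
$$
\frac{\sqrt{n}U_n+\nu_n}{\hat\sigma_n}\overset d\to \frac{N(0,\sigma^2)}{\sigma}=N(0,1)
$$
under $\qq_n$, which is the claim. The only delicate point — the step I would take most care over — is the bookkeeping in the contiguity transfer: the consistency of $\hat\sigma_n^2$ must be phrased purely as a statement about $\pp_n$-probabilities of events in $\mathcal B(\rr^n)$, rather than about a random variable tied to one underlying probability space, so that the contiguity hypothesis of Theorem \ref{alternative.contig.lim} can be applied verbatim; the remaining manipulations are routine.
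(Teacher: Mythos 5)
Your proposal is correct and follows exactly the route the paper takes: numerator limit from Theorem \ref{alternative.contig.lim}, consistency of $\hat{\sigma}_n^2$ from Theorem \ref{thm.consistency.var} transferred to $\qq_n$ via the definition of contiguity, and Slutsky's theorem. The paper states this in one line; your write-up simply makes explicit the contiguity bookkeeping that the paper leaves implicit.
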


\begin{proof} The result follows immediately from Theorem \ref{alternative.contig.lim}, Theorem \ref{thm.consistency.var}, the definition of contiguity, and Slutsky's theorem. \end{proof}

\begin{rem} \rm By Corollary \ref{cor.student.ctgs}, it follows that, as long as $\nu_n \to \nu \in [-\infty, \, \infty]$, for $\phi_n = \phi_n (Y_1, \, \dots,\, Y_n)$ the test function corresponding to the level $\alpha$ permutation test outlined in Theorem \ref{thm.perm.test}, the limiting power against the sequence of measures $\qq_n$ is given by 

$$
\ee_{\qq_n} \left[ \phi_n (Y_1, \, \dots, \, Y_n) \right] \to 1- \Phi\left(z_{1-\alpha}+\frac{\nu}{\sigma} \right) \, \, ,
$$

\noindent where $\Phi$ is the standard Gaussian c.d.f., $\sigma$ is as defined in (\ref{lim.variance.centre}), and $z_{1-\alpha}$ is the $(1-\alpha)$-quantile of the standard Gaussian distribution. \qed

\label{rem.loc.power}
\end{rem}

\begin{example} (White noise process) \rm Let $\{X_n: \, n \in \nn\}$ be a standard Gaussian white noise process, i.e. $X_n \sim N(0, \, 1)$. Let $h > 0$. For each $n \in \nn$, for each $i \in [n]$, let 

$$
Y_{n,\,i} = X_i + \frac{hi}{n^{3/2}} \, \, , 
$$

\noindent i.e. in the setting of Theorem \ref{alternative.contig.lim}, we take

$$
\mu_{n,\, i} = \frac{hi}{n^{3/2}} \, \, .
$$

\noindent Let $\pp_n$ and $\qq_n$ be as in Theorem \ref{alternative.contig.lim}. We begin by showing that $\{\qq_n\}$ is contiguous to $\{\pp_n\}$. We have that the log-likelihood ratio is given by 

$$
\log L_n = -\frac{h^2}{2n^3} \sum_{i=1}^n i^2 + \frac{h}{n^{3/2}} \sum_{i=1}^n i X_i \, \, .
$$

\noindent Let 

\beq
\mu_n = \frac{h^2}{6n^3} n (n+1) (2n+1) \, \, .
\label{lim.mean.ctgs}
\eeq

\noindent We have that 

$$
\log L_n \sim N\left( -\frac{1}{2} \mu_n, \, \mu_n ^2 \right) \, \, ,
$$

\noindent and so, by Corollary 12.3.1 of \cite{tsh}, $\{\qq_n\}$ and $\{\pp_n\}$ are mutually contiguous. Also, 

$$
\begin{aligned}
\frac{1}{\sqrt{n}} \sum_{i=1} ^n \left( \frac{hi}{n^{3/2}} \right)^2 &= \frac{h^2}{n^{7/2}} \sum_{i=1}^n i^2 \\
&= \frac{h^2 n(n+1)(2n+1)}{n^{7/2}} \\
&= o(1) \, \, .
\end{aligned}
$$

\noindent We may therefore apply Theorem \ref{alternative.contig.lim}, and so 

$$
\sqrt{n} U_n + \nu_n \overset{d} \to N\left(0, \, \frac{4}{9} \right) \, \, .
$$

\noindent By Remark \ref{rem.loc.power}, in order to compute the limiting power of the one-sided level $\alpha$ permutation test, it remains to compute the limit of the sequence $\{\nu_n\}$. We have that, as $n \to \infty$, 

$$
\begin{aligned}
\frac{1}{\sqrt{n}} \sum_{i=1} ^n \frac{n+1 -2i}{n-1} \cdot \frac{hi}{n^{3/2} } &= -\frac{h (n+1)}{6n} \\
&\to -\frac{h}{6} \, \, . 
\end{aligned}
$$

\noindent Therefore the local limiting power function of the permutation test is given by 

$$
\ee_{\qq_n} [\phi_n] \to 1- \Phi\left(z_{1-\alpha} -\frac{h}{4} \right)  \, \, , 
$$

\noindent where $\Phi$ is the standard Gaussian c.d.f. \qed 
\label{ex.iid.norm}
\end{example}

\begin{example} ($AR(1)$ process) \rm For $n \geq 1$, let $\{X_n: \, n \in \zz \}$ satisfy the equation, for $\rho \in \rr$ such that $|\rho| < 1$, 

\beq
X_{n + 1} = \rho X_n + \epsilon_{n+1}\, \, ,
\label{ar1.defn}
\eeq

\noindent where the $\epsilon_k$ are independent and identically distributed, with $\ee [\epsilon_k] = 0$, $\ee \left[\epsilon_k ^2\right] = 1$, and, for some $\delta > 0$, 

$$
\ee \left[ \left| \epsilon_k\right| ^{2 + \delta} \right] < \infty \, \, ,
$$ 

\noindent i.e. $X$ is an $AR(1)$ process. Since $|\rho| <1$, there exists a unique stationary solution to (\ref{ar1.defn}). By \cite{mokkadem}, Theorem 1, if the distribution of the $\epsilon_k$ is absolutely continuous with respect to Lebesgue measure, the conditions of Theorem \ref{thm.perm.test} are satisfied. Therefore, asymptotically, the rejection probability of the permutation test applied to such a sequence will be equal to the nominal level $\alpha$.

Now, consider the triangular array of sequences given by, for $i \in [n]$,

$$
Y_{n,\,i} = X_i + \mu_{n, \, i} \, \, ,
$$

\noindent where

$$ 
\mu_{n, \, i} = \frac{hi}{n^{3/2}} \, \, .
$$

\noindent Let $\pp_n$ and $\qq_n$ be as in Theorem \ref{alternative.contig.lim}. We begin by showing that $\{\qq_n\}$ is contiguous to $\{\pp_n\}$. We have that the log-likelihood ratio is given by 

$$
\begin{aligned}
\log L_n &= -\frac{1}{2\left(1- \rho^2 \right)} \sum_{i=1} ^n \mu_{n, \, i}^2 + \frac{1}{\left( 1- \rho^2 \right)} \sum_{i=1}^n \mu_{n, \, i} (X_i - \rho X_{i-1} ) \\
&= -\frac{1}{2\left(1- \rho^2 \right)} \sum_{i=1} ^n \mu_{n, \, i}^2  + \frac{1}{\left( 1- \rho^2 \right)} \sum_{i=1}^n\mu_{n,\, i} \epsilon_i \, \, .
\end{aligned}
$$

\noindent Let $\mu_n$ be as in (\ref{lim.mean.ctgs}). We have that 

$$
\begin{aligned}
\var \left( \sum_{i=1}^n \mu_{n,\,i} \epsilon_i \right) &= \mu_n ^2 \\
&= \frac{h^2}{3} \left( 1 + o(1) \right) \, \, .
\end{aligned}
$$

\noindent Also, by an integral approximation, 

$$
\begin{aligned}
\sum_{i=1} ^n \ee \left[ \left|\mu_{n, \, i} \epsilon_{i } \right| ^{2 + \delta} \right] &= \ee \left[ | \epsilon_{1} |^{2 + \delta} \right] \cdot \frac{h^{2+ \delta}}{3n^{3(2 + \delta)/2} } \sum_{i=1} ^n i^{2 + \delta} \\
&= \ee \left[ | \epsilon_{1} |^{2 + \delta} \right] \cdot \frac{h^{2+ \delta} \cdot n^{3 + \delta}}{3n^{3 + 3\delta/2} } \cdot \frac{1}{3 + \delta} (1 + o(1)) \\
&= o(1) \, \, .
\end{aligned}
$$

\noindent In particular, it follows that the triangular array 

$$
Z_{n, \, i} = \mu_{n,\, i} \epsilon_i 
$$

\noindent satisfies the Lyapunov condition, and so, by the Lyapunov central limit theorem and Slutsky's theorem,

$$
\log L_n \overset{d} \to N\left(-\frac{h^2}{6}, \, \frac{h^2}{3} \right) \, \, .
$$

\noindent Therefore, by Corollary 12.3.1 of \cite{tsh}, $\{\qq_n\}$ and $\{\pp_n\}$ are mutually contiguous. Hence, by the same argument as in Example \ref{ex.iid.norm}, as long as the marginal distribution of the $X_i$ satisfies the conditions of Theorem \ref{alternative.contig.lim}, the local limiting power of the one-sided level $\alpha$ permutation test is given by 

$$
\ee_{\qq_n} [ \phi_n] \to 1- \Phi\left(z_{1-\alpha} - \frac{h}{6 \sigma} \right) \, \, ,
$$

\noindent where $\sigma$ is as in (\ref{lim.variance.centre}). \qed

\label{ex.global.mk.power}
\end{example} 

\section{Tests of local trend}\label{sec.local}

In this section, we discuss the problem of conducting a hypothesis test of local trend, i.e. of testing the null hypothesis, for a stationary sequence $\{X_n: n \in \nn\}$ and some $M \in \nn$, 

$$
H^{(l)} _{0, \, M}: \frac{1}{(n-M)M} \sum_{i=1}^{n-M} \sum_{j={i+1}} ^{i+M } \left( \pp(X_i < X_j) - \pp(X_i > X_j) \right) = 0 \, \, .
$$

Note that, if $H_{0,\, M}^{(l)}$ holds for all $M \in \nn$, we have that, for all $i \neq j$, 

$$
\pp(X_i < X_j) = \pp(X_i > X_j) \, \, ,
$$

\noindent i.e. the intersection of this countable family of null hypotheses results in a stronger condition on the sequence $\{X_n: n \in \nn\}$ than $H_0 ^{(g)}$.

In order to construct an appropriate permutation test of $H_{0,\, M}^{(l)}$, we begin by defining a family of statistics, which we term the \textit{local} Mann-Kendall statistics.

\subsection{The local Mann-Kendall statistic}\label{sec.partial}

\begin{defn} \rm Let $n \in \nn$, and let $\{X_i: i \in [n]\}$ be a sequence of random variables. Let $g(n) \in [n-1]$. Let 

$$
V_n = V_n(X_1,\, \dots,\, X_n)=  \frac{1}{n g(n) } \sum_{i=1}^{n - g(n)} \sum_{j=i+1} ^{i + g(n) } \left( I\{X_i < X_j\} - I\{X_i > X_j\} \right) 
$$

\noindent be the local Mann-Kendall statistic of order $g(n)$.

\end{defn}

\begin{rem} \rm A heuristic interpretation of the parameter $g(n)$ is that it functions as a choice of local bandwidth, i.e. a choice of larger values of $g(n)$ will increase the consideration of the ordering of values of the $X_i$ that are further apart in time.
\end{rem}

While one may define the local Mann-Kendall statistic of order $g(n)$ for any value of $g(n)$ less than $(n-1)$, its primary use throughout this section will be to test hypotheses of a lack of local monotone trend. In general, the two choices of $g:\nn\to \nn$ under consideration throughout the remainder of this section will be either $g(n) = M$ for all $n \in \nn$, or $g$ being a nondecreasing function of $n$ such that $g(n)/n \to 0$ as $n \to \infty$.

With the definition of the local Mann-Kendall statistic in hand, we may begin consideration of a permutation test based on $V_n$. Under the additional assumption that the sequence $\{X_i: i \in [n]\}$ is i.i.d., the randomization hypothesis holds, and so the permutation test based on any test statistic will be exact in finite samples at the nominal level $\alpha$. However, in a weakly dependent setting, the randomization hypothesis does not hold, and so the permutation test based on may not be exact in finite samples or even asymptotically valid. In order to commence appropriately assessment of the validity of such a test, we provide the following result, which describes the limiting permutation distribution based on the test statistic $\sqrt{ng(n)} V_n$.

\begin{thm} Let $\{X_i: i \in \nn\}$ be a sequence of random variables such that, for all $i \neq j$, $\pp(X_i =X_j ) = 0$. Let $g: \nn \to \nn$ be such that $g(n) = o\left( n^{1/7} \right)$.

Let $\hat{R}_n$ be the permutation distribution, based on the test statistic $\sqrt{ng(n)} V_n$, with associated group of transformations $S_n$, the symmetric group of order $n$. For each $t \in \rr$, we have that, as $n \to \infty$,

$$
\sup_{t \in \rr} \left| \hat{R}_n (t) - \Phi(t\sqrt{3}) \right| \to 0 \ \, \, ,
$$

\noindent where $\Phi$ is the distribution of a standard Gaussian random variable.

\label{thm.perm.unstud.local}
\end{thm}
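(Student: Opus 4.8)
The plan is to first reduce to the case of i.i.d.\ data, and then prove a central limit theorem for the local Mann-Kendall statistic of a continuous i.i.d.\ sample by a Hoeffding-type (pseudo-)projection argument. Since $\pp(X_i = X_j) = 0$ for all $i \ne j$, the statistic $\sqrt{ng(n)}V_n$ is a function of the ranks of $(X_1, \dots, X_n)$ only, and under a uniformly random permutation $\Pi_n \in S_n$ independent of the data those ranks are, with probability one, uniformly distributed over all orderings of $[n]$. Exactly as in the proof of Theorem~\ref{basic.perm.lim}, it follows that $\hat R_n$ is (a.s.)\ non-random and equals the distribution of $\sqrt{ng(n)}\,V_n(\tilde X_1, \dots, \tilde X_n)$ for $\tilde X_1, \dots, \tilde X_n$ i.i.d.\ with any continuous marginal, which by rank-invariance we may take to be uniform on $[0,1]$. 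It therefore suffices to show that, in this i.i.d.\ setting, $\sqrt{ng(n)}V_n \ind N(0, 1/3)$; since the limiting c.d.f.\ is continuous, P\'olya's theorem then upgrades pointwise convergence to the uniform convergence claimed. Writing $h(x,y) = I\{x<y\} - I\{x>y\}$, note $\sqrt{ng(n)}V_n = (ng(n))^{-1/2} S_n$ with $S_n = \sum_{1 \le i < j \le n,\, j - i \le g(n)} h(X_i, X_j)$.

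Next I would carry out the pseudo-projection of the proof of Theorem~\ref{thm.null.conv}: write $h(x,y) = h_1(x) - h_1(y) + h_2(x,y)$, where $h_1(x) = 1 - 2F(x)$ and $h_2$ is the degenerate remainder, which satisfies $\ee[h_2(x, X')] = \ee[h_2(X', y)] = 0$ and, under the uniform marginal, $\ee[h_2(X_i, X_j)^2] = 1/3$. This yields $S_n = L_n + D_n$, where $L_n = \sum_{i,j}(h_1(X_i) - h_1(X_j))$. The coefficient of $h_1(X_m)$ in $L_n$ equals $\min(g(n), n-m) - \min(g(n), m-1)$, which vanishes whenever $g(n) < m \le n - g(n)$, so only the $O(g(n))$ boundary indices contribute and a direct computation gives $\var(L_n) = O(g(n)^3)$. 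Hence $L_n/\sqrt{ng(n)} \to 0$ in $L^2$ under $g(n) = o(n^{1/7})$ (indeed under the much weaker $g(n) = o(\sqrt n)$), and the limiting distribution of $\sqrt{ng(n)}V_n$ is governed entirely by the degenerate part $D_n = \sum_{i,j} h_2(X_i, X_j)$.

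The core step is a martingale central limit theorem for $D_n$. Grouping terms by their larger index and setting $\mathcal F_j = \sigma(X_1, \dots, X_j)$, we have $D_n = \sum_{j=2}^n M_j$ with $M_j = \sum_{\max(1, j-g(n)) \le i \le j-1} h_2(X_i, X_j)$; degeneracy of $h_2$ gives $\ee[M_j \mid \mathcal F_{j-1}] = 0$, so $\{M_j\}$ is a martingale difference array with respect to $\{\mathcal F_j\}$. To invoke a standard triangular-array martingale CLT it suffices to verify (a) the conditional variance condition $(ng(n))^{-1}\sum_{j=2}^n \ee[M_j^2 \mid \mathcal F_{j-1}] \inp 1/3$, and (b) a Lyapunov condition; for (b) one uses $|M_j| \le 3g(n)$ and $\ee[M_j^2] \le g(n)/3$ to obtain $\sum_j \ee[M_j^4] = O(ng(n)^3) = o((ng(n))^2)$. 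For (a), $\ee[M_j^2 \mid \mathcal F_{j-1}] = \sum_{i,i'} \psi(X_i, X_{i'})$ with $\psi(x,x') = \ee_Y[h_2(x,Y)h_2(x',Y)]$, so $\sum_{j}\ee[M_j^2 \mid \mathcal F_{j-1}] = \sum_{i,i'} N(i,i')\,\psi(X_i,X_{i'})$, where $N(i,i') = (g(n) - |i-i'|)_+$ up to $O(g(n)^2)$ boundary corrections. Its mean is $\tfrac13|B| \sim ng(n)/3$, since $\ee[\psi(X_i, X_{i'})] = 0$ off the diagonal; and, crucially, $\ee_{X'}[\psi(x, X')] = 0$ forces all cross-covariances among the terms $\psi(X_i, X_{i'})$ to vanish, so that the variance of $\sum_j \ee[M_j^2 \mid \mathcal F_{j-1}]$ is $O(ng(n)^3) = o((ng(n))^2)$. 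Chebyshev's inequality then yields (a), and combining (a), (b), and the negligibility of $L_n$ gives $\sqrt{ng(n)}V_n \ind N(0, 1/3)$, i.e.\ $\hat R_n(t) \to \Phi(t\sqrt 3)$ for every $t$, whence the uniform convergence by P\'olya.

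I expect the main obstacle to be the bookkeeping in step (a): carefully tracking the banded weights $N(i,i')$ together with their $O(g(n)^2)$ boundary corrections, and rigorously establishing the vanishing (or at least negligibility) of the cross-covariances of the $\psi(X_i,X_{i'})$, so that the conditional variance concentrates at the deterministic value $1/3$. The restriction $g(n) = o(n^{1/7})$ is precisely what guarantees that each remainder contribution — the linear part $L_n$, the fourth-moment bound in (b), and the fluctuation of the conditional variance in (a) — is of smaller order than the $\sqrt{ng(n)}$ normalization.
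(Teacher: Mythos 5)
Your argument is correct, and the two main moves are the right ones. The reduction to the i.i.d.\ case via rank-invariance is exactly the mechanism the paper itself uses for Theorem \ref{basic.perm.lim} and for part (ii) of Theorem \ref{local.perm.thm}: with no ties, $\hat R_n$ is a non-random distribution equal to the law of $\sqrt{ng(n)}V_n$ under i.i.d.\ continuous data, so only an ordinary CLT plus P\'olya's theorem is needed. For that CLT your key structural observation is the correct one: under the Hoeffding decomposition $h(x,y)=h_1(x)-h_1(y)+h_2(x,y)$ with $h_1(x)=1-2F(x)$, the banded index set makes the coefficient of $h_1(X_m)$ vanish for all interior $m$, so $\var(L_n)=O(g(n)^3)$ and the statistic is driven by the degenerate part, whose variance is $|B|\,\ee[h_2^2]\sim ng(n)/3$ — consistent with the limit $N(0,1/3)$, i.e.\ $\Phi(t\sqrt3)$. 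The martingale CLT for $D_n=\sum_j M_j$ goes through as you say: degeneracy of $\psi(x,x')=\ee_Y[h_2(x,Y)h_2(x',Y)]$ in each argument does kill every cross-covariance among the $\psi(X_i,X_{i'})$ (pairs sharing zero or one index both give zero), leaving $\var\bigl(\sum_j\ee[M_j^2\mid\mathcal F_{j-1}]\bigr)=O(ng(n)^3)=o\bigl((ng(n))^2\bigr)$, and your Lyapunov bound is likewise $O(ng(n)^3)$. Two small points to tighten: your $S_n$ includes the $O(g(n)^2)$ pairs with $i>n-g(n)$ that the paper's definition of $V_n$ excludes, a discrepancy that is $o(\sqrt{ng(n)})$ and should be noted; and the convergence $\ee[\sum_j M_j^2\mid\mathcal F_{j-1}]/(ng(n))\to 1/3$ deserves one line confirming the boundary corrections to $N(i,i)$ are $O(g(n)^2)$. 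The notable difference from the paper is quantitative: the paper's (supplementary) proof is stated under $g(n)=o(n^{1/7})$, a rate that typically arises from a moment or Berry--Esseen-type bound, whereas your projection-plus-martingale route needs only $g(n)=o(\sqrt n)$ (the binding constraint being the linear part $L_n$). Your approach is therefore at least as general as what the theorem asserts, and arguably cleaner.
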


\begin{example}\label{ex.local.counterex} ($MA(2), \, g(n) \equiv 1$) \rm Consider the setting in which $g(n) = 1$ for all $n \in \nn$, and, for all $i$, 

$$
X_i = \phi_0 \epsilon _i + \phi_1 \epsilon_{i-1} \, \, ,
$$

\noindent where the $\epsilon_i$ are i.i.d. random variables with a symmetric continuous distribution $F$. We have that 

$$
V_n = \frac{1}{n}\sum_{i=1} ^{n-1} \left( I\{ X_{i+1} > X_i \} - I\{X_{i+1} < X_i \} \right) \, \, .
$$

\noindent Let

$$
\begin{aligned}
p_1 &= \pp\left( X_{3} > X_2 > X_1 \right) \\
p_2 &=  \pp\left( X_{3} < X_2 < X_1 \right) \, \, ,
\end{aligned}
$$

\noindent and let $r = 2(p_1 + p_2) - 1$. We have that 

$$
\begin{aligned}
&\var(V_n)\\
&= \frac{1}{n^2} \left( n-1 + 2\sum_{i=1}^{n-2} \cov\left( I\{ X_{i+2} > X_{i+1} \} -I\{ X_{i+2} < X_{i+1} \} , \, I\{ X_{i+1} > X_i \} -I\{ X_{i+1} < X_i \} \right) \right) \\
&= \frac{1}{n^2}\left( n- 1 + 2r(n-2) \right) \\
&= \frac{1+2r}{n} + o\left (\frac{1}{n} \right) \, \, .
\end{aligned}
$$

\noindent It follows that the limiting variance of $\sqrt{ng(n)} V_n$ is given by

$$
\lim_{n \to \infty} \var\left( \sqrt{n g(n)} V_n \right) = 1 + 2r \, \, .
$$

\noindent In the setting in which the $X_i$ are i.i.d. with marginal distribution $F$, we have that $p_1 = p_2 = 1/6$, and so $r = -1/3$. However, since $r$ is, in general, not equal to $-1/3$ in the setting described above, this is not equal to the variance computed in Theorem \ref{thm.perm.unstud.local}. \qed

\end{example}

As illustrated by Example \ref{ex.local.counterex},  a permutation test based on $\sqrt{ng(n)} V_n$ will not be asymptotically valid, since the limiting variance of the permutation distribution and the limiting variance of the test statistic will not be equal in general. We may however proceed as usual, and find an appropriate estimator of the limiting variance of $V_n$ which is asymptotically consistent, and which, under the action of a random permutation, converges to 1 in probability.

\begin{lem} Let $\{X_i, \, i \in[n] \}$ be a stationary, $\alpha$-mixing sequence such that, for all $i \neq j$, $\pp(X_i = X_j) =0$. Suppose also that 

$$
\sum_{n \geq 1} \alpha_X (n) < \infty \, \, .
$$

\noindent Let $g(n) \equiv G \in \nn$ be constant. For each $i \in [n]$, let 

$$
Y_i = \sum_{j = (i - G) \vee 1 }^{i-1} \left( I\{X_i > X_j\} - I\{X_i < X_j\} \right) \, \, .
$$

\noindent Let $b_n \in \nn$ be such that $b_n = o(\sqrt{n})$ and, as $n \to \infty$, $b_n \to \infty$. Let 

\beq
\hat{\sigma}_n^2 = \frac{1}{n} \sum_{i=1} ^n \left(Y_i - \bar{Y}_n\right) ^2 + \frac{2}{n} \sum_{j=1} ^{b_n} \sum_{i=1}^{n-j} (Y_j - \bar{Y}_n) (Y_{i+j} - \bar{Y}_n) \, \, .
\label{sigma.n.lmk}
\eeq

\noindent Let 

$$
\sigma^2 = \var(Y_{G+1}) + 2 \sum_{k \geq 1} \cov(Y_{G+1}, \, Y_{G+ k + 1} ) \, \, .
$$

\noindent We have that $\sigma^2 < \infty$, and, as $n \to \infty$,

$$
\hat{\sigma}_n^2 \overset{p} \to \sigma^2 \, \, .
$$

\label{lem.var.stud}
\end{lem}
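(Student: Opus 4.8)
The plan is to recognize $\hat\sigma_n^2$ as a truncated, Newey--West--type estimator of the long-run variance of the sequence $\{Y_i\}$, and to prove consistency by the usual bias-plus-variance argument, after first disposing of boundary effects and of the recentering by $\bar Y_n$.

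The preliminary observations are as follows. Each $Y_i$ satisfies $|Y_i|\le G$, so it is bounded with moments of all orders. For $i>G$ one has $Y_i=\psi(X_{i-G},\dots,X_i)$ for a fixed measurable $\psi$, so $\{Y_i:i>G\}$ is strictly stationary (extending $\{X_i\}$ to a two-sided stationary sequence if needed); moreover, since $Y_i$ depends only on $X_{i-G},\dots,X_i$, one has $\sigma(Y_i:i\le m)\subseteq\sigma(X_j:j\le m)$ and $\sigma(Y_i:i\ge m)\subseteq\sigma(X_j:j\ge m-G)$, whence $\alpha_Y(n)\le\alpha_X(n-G)$ for $n>G$ and therefore $\sum_n\alpha_Y(n)<\infty$. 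Writing $\mu=\ee[Y_{G+1}]$ and $\gamma(k)=\cov(Y_{G+1},Y_{G+1+k})$, Davydov's covariance inequality for bounded variables gives $|\gamma(k)|\le 4G^2\alpha_Y(k)$, so $\sum_k|\gamma(k)|<\infty$; hence $\sigma^2=\gamma(0)+2\sum_{k\ge1}\gamma(k)$ is finite, which is the first assertion.

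Next come two reductions. First, the $G$ boundary terms $Y_1,\dots,Y_G$ enter $\bar Y_n$ only at order $G/n$, and in the double sum each index $j$ involves at most $2G$ products containing a boundary term, each bounded by $4G^2$; since $b_n=o(\sqrt n)$, a fortiori $b_n=o(n)$, replacing these terms by the stationary version alters $\hat\sigma_n^2$ by $O(Gb_n/n)=o(1)$, so we may assume all $Y_i$ are the stationary version. Second, by the ergodic theorem ($\{X_i\}$, hence $\{Y_i\}$, is $\alpha$-mixing and therefore ergodic, and $Y_i$ is integrable) we have $\bar Y_n\to\mu$, and in fact $\bar Y_n-\mu=O_p(n^{-1/2})$ since $\var\big(\sum_{i=1}^n(Y_i-\mu)\big)=O(n)$ by summability of $\gamma$. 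Expanding $(Y_i-\bar Y_n)(Y_{i+j}-\bar Y_n)$ about $\mu$, the cross terms are controlled by $\ee\big|\sum_{i=1}^m(Y_i-\mu)\big|\le\big(\var(\sum_{i=1}^m(Y_i-\mu))\big)^{1/2}=O(\sqrt m)$, which makes $\tfrac1n\sum_{j\le b_n}\big|\sum_{i\le n-j}(Y_i-\mu)\big|=O(b_n/\sqrt n)=o(1)$, while the quadratic term is $2b_n(\bar Y_n-\mu)^2=O_p(b_n/n)=o_p(1)$. After these reductions it suffices to prove
\[
\frac1n\sum_{i=1}^n(Y_i-\mu)^2+\frac2n\sum_{j=1}^{b_n}\sum_{i=1}^{n-j}(Y_i-\mu)(Y_{i+j}-\mu)\ \overset{p}\to\ \sigma^2 .
\]
The first sum converges in probability to $\ee[(Y_{G+1}-\mu)^2]=\gamma(0)$ by the ergodic theorem. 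For the second sum $T_n$, taking expectations gives $\ee[T_n]=2\sum_{j=1}^{b_n}\gamma(j)-\tfrac2n\sum_{j=1}^{b_n}j\gamma(j)$; the first piece tends to $2\sum_{j\ge1}\gamma(j)$ since $b_n\to\infty$ and $\sum_j|\gamma(j)|<\infty$, while the second is $O(b_n/n)=o(1)$, so $\ee[T_n]\to2\sum_{j\ge1}\gamma(j)$.

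The crux is to show $\var(T_n)\to0$. Setting $\xi_i=Y_i-\mu$, one writes $\var(T_n)=\tfrac{4}{n^2}\sum_{j,j'=1}^{b_n}\cov\big(\sum_i\xi_i\xi_{i+j},\sum_{i'}\xi_{i'}\xi_{i'+j'}\big)$ and expands each fourth-order moment via the cumulant identity
\[
\cov(\xi_i\xi_{i+j},\xi_{i'}\xi_{i'+j'})=\cov(\xi_i,\xi_{i'})\cov(\xi_{i+j},\xi_{i'+j'})+\cov(\xi_i,\xi_{i'+j'})\cov(\xi_{i+j},\xi_{i'})+\mathrm{cum}_4(\xi_i,\xi_{i+j},\xi_{i'},\xi_{i'+j'}) .
\]
Summing the two products-of-covariances terms, the substitution $m=i'-i$ together with $\sum_k|\gamma(k)|<\infty$ reduces their total to $O(n\,b_n^2)$, so after division by $n^2$ they are $O(b_n^2/n)=o(1)$ precisely because $b_n=o(\sqrt n)$. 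The remaining fourth-cumulant contribution is handled using the boundedness of $\xi_i$ together with the standard estimate $|\mathrm{cum}_4(\xi_{t_1},\xi_{t_2},\xi_{t_3},\xi_{t_4})|\le C\,\alpha_X(\text{largest consecutive gap among the ordered }t_i)$ for bounded $\alpha$-mixing sequences, combined with $\sum_n\alpha_X(n)<\infty$ and $b_n=o(\sqrt n)$, to show that, after counting index configurations weighted by $\alpha_X$ of the relevant gaps, this part is also $o(n^2)$. Chebyshev's inequality then gives $T_n\overset{p}\to 2\sum_{j\ge1}\gamma(j)$, and combining with the limit of the first sum yields $\hat\sigma_n^2\overset{p}\to\gamma(0)+2\sum_{j\ge1}\gamma(j)=\sigma^2$. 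The step I expect to be the main obstacle is exactly this last one — verifying that the normalized fourth-order cumulant term vanishes, i.e. the careful counting of the index configurations weighted by $\alpha_X$ of the relevant gaps under only $\sum_n\alpha_X(n)<\infty$ and the bandwidth rate $b_n=o(\sqrt n)$; everything else reduces to routine second-moment estimates once the boundary and recentering reductions are carried out.
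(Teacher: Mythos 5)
Your proposal is correct and follows what is, for this kind of truncated long-run-variance estimator, the standard argument: dispose of the $G$ boundary terms and of the recentering at $\bar Y_n$, compute the expectation of the remaining bilinear form, and kill its variance by a second-moment calculation split into a product-of-covariances part and a fourth-cumulant part. (The paper's own proof is deferred to its supplement; your reductions and the respective roles you assign to $\sum_n\alpha_X(n)<\infty$ and $b_n=o(\sqrt n)$ are exactly the ones this argument requires. You are also right to read the inner factor in (\ref{sigma.n.lmk}) as $Y_i-\bar Y_n$ rather than $Y_j-\bar Y_n$.) All of the estimates you state are accurate; the only step you do not actually carry out is the one you flag, the bound on the fourth-cumulant contribution, so let me confirm that it closes under exactly your hypotheses.

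Write $r$ for the largest consecutive gap of the ordered indices $t_{(1)}\le t_{(2)}\le t_{(3)}\le t_{(4)}$. The bound $|\mathrm{cum}_4(\xi_{t_1},\xi_{t_2},\xi_{t_3},\xi_{t_4})|\le C\,\alpha_X\bigl((r-G)\vee 0\bigr)$ follows from the covariance inequality for bounded variables applied across that gap, using $\ee\xi_t=0$ when the gap isolates a single index and noting that every product-of-covariances term in the cumulant also contains a factor bridging the gap; the shift by $G$ accounts for $Y_t$ being a function of $(X_{t-G},\dots,X_t)$ and only affects constants. Now fix $j,j'\le b_n$ and, by symmetry, take $m=i'-i\ge0$. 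If $m>j$, the ordered indices are $i<i+j<i'\le i'+j'$ with middle gap $m-j$, so $r\ge m-j$; summing $\alpha_X$ over $m>j$ and then over $i$ gives $O\bigl(n\sum_k\alpha_X(k)\bigr)=O(n)$ per pair $(j,j')$, hence $O(nb_n^2)$ in total. If $0\le m\le j$, the four indices span at least $j$, so $r\ge j/3$ and $|\mathrm{cum}_4|\le C\,\alpha_X\bigl((\lfloor j/3\rfloor-G)\vee0\bigr)$; there are at most $n(j+1)$ such pairs $(i,i')$, and
$$
\sum_{j'\le b_n}\sum_{j\le b_n} n(j+1)\,\alpha_X\bigl((\lfloor j/3\rfloor-G)\vee0\bigr)\;\le\; n\,b_n(b_n+1)\Bigl(C_G+3\sum_{k\ge1}\alpha_X(k)\Bigr)\;=\;O\bigl(nb_n^2\bigr)\,.
$$
So the cumulant part of $n^2\var(T_n)$ is $O(nb_n^2)$, the same order as the product-of-covariances part, and $b_n=o(\sqrt n)$ is precisely what makes both $o(n^2)$. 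In particular, no strengthening such as $\sum_k k^2\alpha_X(k)<\infty$ (which an unconstrained four-fold cumulant sum would demand) is needed here, because the lags $j,j'$ are restricted to $[1,b_n]$. With that step filled in, your proof is complete.
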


With the result of Lemma \ref{lem.var.stud} in hand, we may now proceed to studentize the test statistic. By an application of Slutsky's theorem for randomization distributions, this studentization factor will have no effect on the limiting permutation distribution, but will result in the limiting distribution of the test statistic being asymptotically pivotal. Therefore, combining the previous results, we obtain the following.

\begin{thm} In the setting of Lemma \ref{lem.var.stud}, let $\mu = \ee Y_{1+G} $. Let 

$$
\hat{\tau}_n ^2 = \frac{1}{G} \hat{\sigma}_n ^2 \,\,,
$$

\noindent and let $\nu = \mu/G$. We have the following:

\begin{enumerate}

\item[(i)] As $n \to \infty$, 

$$
\frac{\sqrt{nG}(V_n - \nu )}{\hat{\tau}_n} \overset{d} \to N(0, \, 1) \, \, .
$$

\item[(ii)] Let $\hat{R}_n$ be the permutation distribution, with associated group of transformations $S_n$, the symmetric group of order $n$, based on the test statistic $\sqrt{nG} V_n/\hat{\tau}_n$. We have that, as $n\to\infty$, 

$$
\sup_{t \in \rr} \left| \hat{R}_n (t) - \Phi(t) \right| \overset{a.s.}\to 0 \, \,, 
$$

\noindent where $\Phi$ is the distribution of a standard Gaussian random variable.

\end{enumerate} 
\label{local.perm.thm}
\end{thm}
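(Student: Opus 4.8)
The plan is to prove (i) by rewriting $\sqrt{nG}\,V_n$ as a normalized partial sum of the variables $Y_i$ from Lemma~\ref{lem.var.stud}, up to an asymptotically negligible boundary correction, applying a central limit theorem for bounded $\alpha$-mixing sequences, and then studentizing via Lemma~\ref{lem.var.stud} together with Slutsky's theorem; and to prove (ii) by the two–independent–copies argument for rank statistics, exactly as in the proof of Theorem~\ref{thm.perm.test}(ii).

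For part (i), I would first record the combinatorial identity underlying the definition of the $Y_i$: each unordered pair $\{a,b\}$ with $a<b\le n$ and $b-a\le G$ contributes $I\{X_b>X_a\}-I\{X_b<X_a\}$ exactly once to $\sum_{i=1}^n Y_i$ (through the term with larger index $i=b$) and at most once to $nG\,V_n$, so that $nG\,V_n=\sum_{i=1}^n Y_i-R_n$, where $R_n$ collects the $\binom{G}{2}$ boundary pairs with smaller index exceeding $n-G$ and hence satisfies $|R_n|\le\binom{G}{2}$. Since $G$ is fixed this gives $\sqrt{nG}\,V_n=(nG)^{-1/2}\sum_{i=1}^n Y_i+O(n^{-1/2})$, and subtracting $\sqrt{nG}\,\nu=\mu\sqrt{n/G}$ yields $\sqrt{nG}(V_n-\nu)=G^{-1/2}n^{-1/2}\sum_{i=1}^n(Y_i-\mu)+o(1)$. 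The first $G$ summands are bounded and contribute $o(1)$ after normalization, while $\{Y_i:i\ge G+1\}$ is a bounded, strictly stationary sequence (each $Y_i$ is a fixed measurable function of $(X_{i-G},\dots,X_i)$) whose $\alpha$-mixing coefficients satisfy $\alpha_Y(m)\le\alpha_X(m-G)$ for $m>G$ and are therefore summable. A central limit theorem for such sequences (e.g.\ Theorem~2.1 of \cite{neumann}) then gives $n^{-1/2}\sum_{i=1}^n(Y_i-\mu)\overset{d}\to N(0,\sigma^2)$ with $\sigma^2$ as in Lemma~\ref{lem.var.stud}, so $\sqrt{nG}(V_n-\nu)\overset{d}\to N(0,\sigma^2/G)$. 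Since $\hat\tau_n^2=\hat\sigma_n^2/G\overset{p}\to\sigma^2/G$ by Lemma~\ref{lem.var.stud}, part (i) follows from Slutsky's theorem (using $\sigma^2>0$).

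For part (ii), the key point is that $V_n$ and $\hat\sigma_n^2$, hence $\hat\tau_n^2$ and the test statistic $\sqrt{nG}\,V_n/\hat\tau_n$, depend on $(X_1,\dots,X_n)$ only through the vector of ranks, which is a.s.\ well defined because $F$ is continuous. Taking $\Pi_n,\Pi_n'\sim\text{Unif}(S_n)$ independent of each other and of the data, the rank vectors of $\mathbf{X}_{\Pi_n}$ and $\mathbf{X}_{\Pi_n'}$ are i.i.d.\ uniform over the permutations of $[n]$, so $\sqrt{nG}\,V_{\Pi_n}/\hat\tau_{\Pi_n}$ and $\sqrt{nG}\,V_{\Pi_n'}/\hat\tau_{\Pi_n'}$ are i.i.d., each distributed as the statistic evaluated at an i.i.d.\ sample $\tilde X_1,\dots,\tilde X_n$ with law $F$. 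An i.i.d.\ sequence is strictly stationary and $\alpha$-mixing with all coefficients zero, and by symmetry $\mu=\E Y_{G+1}=0$ there, so part (i) applies and gives $\sqrt{nG}\,V_n(\tilde X)/\hat\tau_n(\tilde X)\overset{d}\to N(0,1)$ (the required $\sigma^2=G/3>0$ in this case being forced by Theorem~\ref{thm.perm.unstud.local}); hence $\big(\sqrt{nG}\,V_{\Pi_n}/\hat\tau_{\Pi_n},\,\sqrt{nG}\,V_{\Pi_n'}/\hat\tau_{\Pi_n'}\big)\overset{d}\to N(0,I_2)$. Theorem~15.2.3 of \cite{tsh} then yields $\sup_{t\in\rr}|\hat R_n(t)-\Phi(t)|\overset{a.s.}\to 0$.

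The main obstacle is the central limit step in part (i): once the bounded, summably $\alpha$-mixing sequence $\{Y_i\}$ is identified and its limiting variance is matched to the $\sigma^2$ of Lemma~\ref{lem.var.stud} (which also supplies consistency of $\hat\sigma_n^2$), the remainder is bookkeeping — the boundary correction $R_n$ is $O(1)$ precisely because $G$ is fixed, and part (ii) is the routine rank-statistic argument. The one point needing care is the nondegeneracy $\sigma^2>0$, required both for the studentized limit in (i) and (via the value $\sigma^2=G/3$) for the i.i.d.\ instance invoked in (ii).
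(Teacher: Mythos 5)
Your proposal is correct and follows essentially the same route as the paper: part (i) is proved by writing $nG\,V_n$ as $\sum_i Y_i$ up to an $O(1)$ boundary term, applying a CLT for bounded stationary mixing sequences (the paper cites Ibragimov rather than Neumann) and studentizing via Lemma \ref{lem.var.stud} and Slutsky, and part (ii) rests on the same rank-statistic observation. The only cosmetic difference is in (ii), where the paper notes directly that $\hat{R}_n$ equals the fixed i.i.d.\ null distribution of the statistic and invokes (i), whereas you route through the two-independent-permutations argument and Theorem 15.2.3 of \cite{tsh}; both work, and your explicit handling of the boundary term $R_n$ and of the nondegeneracy $\sigma^2>0$ is if anything more careful than the paper's.
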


\begin{proof} As in the proof of Lemma \ref{lem.var.stud}, let $Z_i = Y_{i +G}$, for $1 \leq i \leq n -G$. Let $m = n-G$. Since the $Y_i$ are uniformly bounded, we have that 

$$
\begin{aligned}
\sqrt{n} (\bar{Y}_n-\mu) &= \sqrt{m}( \bar{Z}_m -\mu) (1 + o(1)) + \frac{1}{\sqrt{n}} \sum_{i=1} ^G Y_i \\
&= \sqrt{m}( \bar{Z}_m -\mu) (1 + o(1)) + O\left(\frac{1}{\sqrt{n}} \right) \, \, .
\end{aligned}
$$

\noindent In particular, by Ibragimov's central limit theorem \citep{ibragimov}, Slutsky's theorem, and the result of Lemma \ref{lem.var.stud}, we have that

$$
\frac{\sqrt{n}\left( \bar{Y}_n - \mu \right)}{\hat{\sigma}_n} \overset{d}\to N(0, \, 1)\,\, .
$$

\noindent Since 

$$
V_n = \frac{1}{g(n)} \bar{Y}_n \, \, ,
$$

\noindent the result of (i) follows.

Turning our attention to (ii), we observe that, since each $Y_i$ is a rank statistic, and, conditional on the data, under the action of $\Pi_n \sim \text{Unif}(S_n)$, the ranks of the $X_i$ are uniformly distributed with probability 1, the permutation distribution $\hat{R}_n$ is exactly equal to the unconditional distribution of the test statistic $\sqrt{ng(n)}V_n /\hat{\tau}_n$ in the case when the $X_i$ are i.i.d. with a continuous marginal distribution $F$. In this case, since $\mu = 0$, the result of (ii) follows from that of (i). 
\end{proof}

\begin{rem} \rm As a result of Theorem \ref{local.perm.thm}, under the conditions laid out therein we have that a one-sided permutation test of the null hypothesis $H_{0,\, M}^{(l)}$, based on the test statistic $\sqrt{nM}V_n/\hat{\tau}_n$, where $V_n$ is the local Mann-Kendall statistic of order $M$, is asymptotically valid.

\end{rem} 

\begin{rem} \rm Let $M, \, r \in \nn$ such that $M > r$. By an entirely analogous argument to the one presented in this section, one may construct permutation tests of the null hypothesis 

$$
H_{0, \, M,\, r} ^{(l)} \frac{1}{(n-M) \binom{M}{r} }\sum_{ i_1 < i_2 < \dots < i_r \leq i_1 + M} \left(\pp(X_{i_1} < \dots < X_{i_r} ) - \pp(X_{i_1} > \dots > X_{i_r} ) \right) = 0 \, \, ,
$$

\noindent using a studentized version of the test statistic

$$
\frac{1}{(n-M)\binom{M}{r} } \sum_{ i_1 < i_2 < \dots < i_r \leq i_1 + M} \left(I(X_{i_1} < \dots < X_{i_r} ) - I(X_{i_1} > \dots > X_{i_r} ) \right) \,\,.
$$
\end{rem} 

\section{Simulation results}\label{sec.simulations.monotone.trend}

In this section, we provide Monte Carlo simulations illustrating our results. Subsection \ref{subsec.global.sim} provides simulation results of the performance of the global Mann-Kendall permutation test, and Subsection \ref{subsec.local.sim} illustrates the performance of the local Mann-Kendall permutation test. In both sections, we consider one-sided tests, i.e. those rejecting for large values of the test statistic, conducted at the nominal level $\alpha = 0.05$. The simulation results confirm that the two permutation tests are valid in that, in large samples, the rejection probability under the null hypothesis is approximately equal to $\alpha$.

\subsection{Testing for global trend}\label{subsec.global.sim}

We begin with a comparison of the performance of the studentized global Mann-Kendall permutation test against the classical Mann-Kendall test. As a review, the classical Mann-Kendall test computes the statistic 

$$
U_n = U_n(X_1, \, \dots,\, X_n) ={\binom{n}{2}}^{-1} \sum_{i < j} \left( I\{X_i < X_j\} - I\{X_i > X_j\}\right) \, \, .
$$

\noindent Under the null hypothesis that the sequence $\{X_n: n \in \nn\}$ is i.i.d. and that $\pp(X_i = X_j) = 0$ for all $i \neq j$, the statistic $U_n$ has a \textit{fixed} distribution $G_n$, and so the test compares $U_n$ to the $(1- \alpha)$ quantile of $G_n$ in order to reject or not reject the null hypothesis. 

Note that, by the result of Theorem \ref{basic.perm.lim}, the permutation distribution based on the unstudentized Mann-Kendall statistic converges weakly in probability to the same limiting distribution as the Mann-Kendall statistic under the additional assumption that the sequence is i.i.d. It follows that, for a fixed infinite-dimensional sequence $\{X_n: n \in \nn\}\sim P$, the rejection probability of the classical Mann-Kendall test and the unstudentized global Mann-Kendall permutation test will be asymptotically equal as $n \to \infty$. On account of this, in the following simulations, we present a comparison of the studentized global Mann-Kendall permutation test and the classical Mann-Kendall test.

In this simulation, we consider the following two settings: in Table \ref{tab.mdep.gmk}, we consider processes of the following form. Let $m \in \nn$. For $\{Z_n: n \in \nn\}$ be i.i.d. standard Gaussian random variables, for each $n \in \nn$, let

$$
X_n = \prod_{j=0}^{m -1} Z_{n + j} \, \, .
$$

\noindent This sequence is $m$-dependent and stationary, and, for each $i \neq j$, $\pp(X_i \neq X_j) = 0$. Therefore $\{X_n: n \in \nn\}$ satisfies the conditions of Theorem \ref{thm.perm.test}, and so the corresponding permutation test is asymptotically valid at the nominal level $\alpha$. Several distributions other than the standard Gaussian distribution were considered for the distribution of the $Z_i$, but these were observed to have little impact on the rejection probabilities of either the studentized global Mann-Kendall permutation test or the classical Mann-Kendall test.

\begin{table}[h]
\centering % centering table
\begin{tabular}{cc rrrrrrr} % creating eight columns
\hline\hline %inserting double-linex
$m$&$n$&10  &50&    100  &  500 &  1000   \\ [0.5ex]
\hline % inserts single-line
\multirow{2}{*}{0}	&Stud. Perm.&0.045 &0.059&0.047&0.047&0.054\\
				&Classical M-K &0.042&0.038&0.047&0.049&0.044\\
				 \hline
\multirow{2}{*}{10}&Stud. Perm.& 0.054&0.061&0.062&0.057&0.055\\
				&Classical M-K&0.059&0.061&0.076&0.036&0.055 \\
				 \hline
\multirow{2}{*}{20}&Stud. Perm.&0.036&0.082&0.063&0.051&0.045\\
				&Classical M-K &0.056&0.062&0.069&0.057&0.061 \\
				 \hline
\multirow{2}{*}{30}	&Stud. Perm.&0.056&0.085&0.077&0.052&0.046\\
				&Classical M-K &0.066&0.083&0.071&0.052&0.060 \\
\hline \hline% inserts single-line
\end{tabular}
\caption{Monte Carlo simulation results for null rejection probabilities for tests of $H_0^{(g)}$, in an $m$-dependent Gaussian product setting.} \label{tab.mdep.gmk}
\end{table}

We also consider more traditional autoregressive settings in Tables \ref{tab.ar1.gmk} and \ref{tab.ar2.gmk}. Namely, we consider the following two examples. In Table \ref{tab.ar1.gmk}, we consider the following processes: for $\{\epsilon_n: n \in \nn\}$ a sequence of i.i.d. standard Gaussian random variables and some constant $\rho \in (-1,\, 1)$, let $X_1 \sim N(0, \, (1-\rho^2)^{-1})$, independent of the $\epsilon_i$, and for each $n \geq 2$, let 

$$
X_n = \rho X_{n -1} + \epsilon_n \,\, ,
$$

\noindent i.e. $\{X_n: n \in \nn\}$ follows the unique distribution of the stationary $AR(1)$ process with standard Gaussian innovations. 

\begin{table}[h]
\centering % centering table
\begin{tabular}{cc rrrrrrr} % creating eight columns
\hline\hline %inserting double-linex
$\rho$&$n$&10  &50&    100  &  500 &  1000   \\ [0.5ex]
\hline % inserts single-line
\multirow{2}{*}{-0.6}	&Stud. Perm.&0.047 &0.192&0.039&0.074&0.043\\
				&Classical M-K &0.004&0.002&0.002&0.003&0.000\\
				 \hline
\multirow{2}{*}{-0.2}&Stud. Perm.& 0.043&0.072&0.058&0.046&0.057\\
				&Classical M-K&0.029&0.028&0.019&0.021&0.025 \\
				 \hline
\multirow{2}{*}{0.2}&Stud. Perm.&0.030&0.053&0.046&0.042&0.058\\
				&Classical M-K &0.081&0.081&0.084&0.085&0.099 \\
				 \hline
\multirow{2}{*}{0.6}&Stud. Perm.&0.035&0.052&0.068&0.042&0.049\\
				&Classical M-K &0.183&0.200&0.196&0.213&0.206 \\
\hline \hline% inserts single-line
\end{tabular}
\caption{Monte Carlo simulation results for null rejection probabilities for tests of $H_0^{(g)}$, in an $AR(1)$ setting.} \label{tab.ar1.gmk}
\end{table}

Similarly, in Table \ref{tab.ar2.gmk}, we consider processes of the following form: for $\rho \in (-1,\, 1)$, let $\{Z_n: n \in \nn\}$ and $\{Z_n': n \in \nn\}$ be independent stationary $AR(1)$ processes with i.i.d. standard Gaussian innovations. For $n \in \nn\cup \{0\}$, let 

$$
\begin{aligned}
X_{2 n + 1} &= Z_n \\
X_{2n + 2} &= Z_n' \,\, .
\end{aligned}
$$

\noindent Note that we may also express $\{X_n\}$ as the unique stationary process satisfying the autoregressive equation, for $n \geq 3$,

$$
X_n = \rho X_{n-2} + \eta_n \, \, ,
$$

\noindent for $\{\eta_n: n \in \nn\}$ a sequence of i.i.d. standard Gaussian random variables, i.e. the $X_i$ follow an $AR(2)$ process.

As discussed in Example \ref{ex.global.mk.power}, the asymptotic rejection probability of the permutation test applied to such sequences is equal to the nominal level $\alpha$. 

For each of the three situations, 1,000 simulations were performed. Within each simulation, the permutation test was calculated by randomly sampling 1,000 permutations.

\begin{table}[h]
\centering % centering table
\begin{tabular}{cc rrrrrrr} % creating eight columns
\hline\hline %inserting double-linex
$\rho$&$n$&10  &50&    100  &  500 &  1000   \\ [0.5ex]
\hline % inserts single-line
\multirow{2}{*}{-0.6}&Stud. Perm.&0.148 &0.354&0.024&0.188&0.033\\
				&Classical M-K &0.018&0.003&0.002&0.000&0.000\\
				 \hline
\multirow{2}{*}{-0.2}&Stud. Perm.& 0.074&0.092&0.059&0.052&0.039\\
				&Classical M-K&0.025&0.023&0.034&0.029&0.026 \\
				 \hline
\multirow{2}{*}{0.2}&Stud. Perm.&0.030&0.048&0.048&0.037&0.055\\
				&Classical M-K &0.054&0.086&0.096&0.081&0.093 \\
				 \hline
\multirow{2}{*}{0.6}&Stud. Perm.&0.009&0.104&0.060&0.059&0.067\\
				&Classical M-K &0.074&0.179&0.192&0.195&0.192 \\
\hline \hline% inserts single-line
\end{tabular}
\caption{Monte Carlo simulation results for null rejection probabilities for tests of $H_0^{(g)}$, in an $AR(2)$ setting.} \label{tab.ar2.gmk}
\end{table}

We observe that, in the $m$-dependent Cauchy product setting of Table \ref{tab.mdep.gmk}, both the studentized global Mann-Kendall permutation test and the classical Mann-Kendall test perform comparably, both controlling the rejection probability at (close to) the nominal level $\alpha$. However, in contrast, while the studentized global Mann-Kendall permutation test also exhibits Type 1 error control at the nominal level $\alpha$ in both the $AR(1)$ and $AR(2)$ settings of Tables \ref{tab.ar1.gmk} and \ref{tab.ar2.gmk}, respectively, we observe the following phenomena. For $\rho > 0$, we observe that the rejection probability of the classical Mann-Kendall test is significantly greater than $\alpha$, i.e. we do not have Type 1 error control. In addition, for $\rho < 0$, we observe that the performance of the classical Mann-Kendall test is also unsatisfactory; namely, the rejection probabilities obtained are significantly below the nominal level $\alpha$, i.e. the test is overly conservative.

These issues may be explained as follows: since the limiting variance $\sigma^2$ of the Mann-Kendall statistic is given by (\ref{lim.variance.centre}), for positively (negatively) autocorrelated sequences we have that $\sigma^2$ is greater than (less than) the limiting variance of the Mann-Kendall statistic under the additional assumption that the sequence is i.i.d. Heuristically, we have the interpretation that the classical Mann-Kendall test ``confuses" positive or negative autocorrelation with positive and negative trend, respectively, whereas the studentized global Mann-Kendall test does not succumb to this issue.

We observe several computational choices to be made when applying the permutation testing framework in practice. By the results of Theorems \ref{thm.consistency.var} and \ref{thm.perm.test}, for large values of $n$, the estimate $\hat{\sigma}_n^2$, as defined in (\ref{var.est.global}), will be be strictly positive with high probability. However, for smaller values of $n$, it may be the case that a numerically negative value of $\hat{\sigma}_n ^2$ is observed, either when computing the test statistic or the permutation distribution. A trivial solution to this issue is the truncate the estimate at some sufficiently small fixed lower bound $\epsilon >0$. Note that, for appropriately small choices of $\epsilon$, i.e. $\epsilon < \sigma^2$, the results of Theorems \ref{thm.consistency.var} and \ref{thm.perm.test} still hold, i.e. inference based on this choice of studentization is still asymptotically valid. In practice, however, the suitability of a choice of $\epsilon$ for a particular numerical application is affected by the distribution of the $X_i$, and, in general, the estimated variance $\hat{\sigma}_n^2$ is bounded away from zero with high probability, on account of the additive constant $4/9$ in the expression (\ref{var.est.global}). For the above simulation, a constant value of $\epsilon = 10^{-3}$ was used.

A further choice is that of the truncation sequence $\{b_n: \, n \in \nn\}$ used in the definition of $\hat{\sigma}_n ^2$. Any sequence $  \{b_n\}$ such that, as $n \to \infty$, $b_n \to \infty$ and $b_n = o\left(\sqrt{n} \right)$ is theoretically justified by Theorem \ref{thm.perm.test}, although, in a specific setting, some choices of $\{b_n\}$ will lead to more numerical stability than others. In practice, several choices of $\{b_n\}$ were implemented, but were found to make little difference to the rejection probabilities observed. In the simulations above, $\{b_n\}$ was taken to be $b_n = [n^{1/3}]$, where $[x]$ denotes the integer part of $x$.

\subsection{Tests of local trend}\label{subsec.local.sim}

We now turn our attention to simulations involving the local Mann-Kendall permutation test of the hypothesis $H_{0, \, M}^{(l)}$. Under the assumption that the sequence $\{X_i: i \in [n]\}$ is i.i.d., the randomization hypothesis holds, and so the unstudentized local Mann-Kendall permutation test will be exact in finite samples and asymptotically valid. However, in general, the randomization hypothesis does not hold under $H_{0,\,M }^{(l)}$, and so the unstudentized local Mann-Kendall permutation test will not be exact or even asymptotically valid. However, by the result of Theorem \ref{local.perm.thm}, the studentized local Mann-Kendall permutation test will be asymptotically valid at the nominal level $\alpha$. 

In order to illlustrate this behavior, we provide a comparison of the simulated rejection probabilities of the studentized and unstudentized local Mann-Kendall permutation tests in two different settings. In both of the settings described below, we consider the choice of $M =5$. For both of the following situations, 1,000 simulations were performed. Within each simulation, the permutation test was calculated by randomly sampling 1,000 permutations. 

In Table \ref{tab.mdep.lmk}, we compare the simulated rejection probabilities of these two tests in the Gaussian product $m$-dependent setting described in Subsection \ref{subsec.global.sim}.

\begin{table}[h]
\centering % centering table
\begin{tabular}{cc rrrrrrr} % creating eight columns
\hline\hline %inserting double-linex
$m$&$n$&20  &50&    100  &  500 &  1000   \\ [0.5ex]
\hline % inserts single-line
\multirow{2}{*}{0}	&Stud. Perm.&0.049 &0.047&0.051&0.046&0.058\\
				&Unstud. Perm.&0.060&0.062&0.061&0.057&0.058\\
				 \hline
\multirow{2}{*}{1}&Stud. Perm.& 0.050&0.066&0.050&0.023&0.032\\
				&Unstud. Perm.&0.099&0.096&0.098&0.091&0.101 \\
				 \hline
\multirow{2}{*}{2}&Stud. Perm.&0.057&0.078&0.065&0.009&0.021\\
				&Unstud. Perm.&0.115&0.129&0.131&0.164&0.138 \\
				 \hline
\multirow{2}{*}{3}	&Stud. Perm.&0.045&0.097&0.064&0.001&0.019\\
				&Unstud. Perm.&0.141&0.160&0.167&0.150&0.180 \\
\hline \hline% inserts single-line
\end{tabular}
\caption{Monte Carlo simulation results for null rejection probabilities for tests of $H_{0,\,M}^{(l)}$, for $M = 5$, in an $m$-dependent Gaussian product setting.} \label{tab.mdep.lmk}
\end{table}

We observe that, in Table \ref{tab.mdep.lmk}, the studentized permutation test controls the rejection probability at the nominal level $\alpha$. However, in contrast, we observe that the unstudentized permutation test only has a rejection probability approximately equal to $\alpha$ in the case $m = 0$, i.e. when the $X_i$ are i.i.d. and the randomization hypothesis holds. For $m > 0$, the unstudentized permutation test visibly does not control the rejection probability at the nominal level $\alpha$.

In Table \ref{tab.ar1.lmk}, we provide a comparison of the rejection probabilities of the studentized and unstudentized local Mann-Kendall permutation tests in the $AR(1)$ setting of Subsection \ref{subsec.global.sim}.

\begin{table}[h]
\centering % centering table
\begin{tabular}{cc rrrrrrr} % creating eight columns
\hline\hline %inserting double-linex
$\rho$&$n$&20  &50&    100  &  500 &  1000   \\ [0.5ex]
\hline % inserts single-line
\multirow{2}{*}{-0.6}	&Stud. Perm.&0.048 &0.182&0.050&0.055&0.044\\
				&Unstud. Perm.&0.021&0.028&0.036&0.049&0.042\\
				 \hline
\multirow{2}{*}{-0.2}&Stud. Perm.& 0.062&0.070&0.053&0.038&0.045\\
				&Unstud. Perm.&0.049&0.052&0.039&0.047&0.052 \\
				 \hline
\multirow{2}{*}{0.2}&Stud. Perm.&0.037&0.029&0.033&0.078&0.061\\
				&Unstud. Perm.&0.085&0.072&0.071&0.087&0.066 \\
				 \hline
\multirow{2}{*}{0.6}	&Stud. Perm.&0.015&0.024&0.007&0.009&0.030\\
				&Unstud. Perm.&0.177&0.145&0.125&0.111&0.114 \\
\hline \hline% inserts single-line
\end{tabular}
\caption{Monte Carlo simulation results for null rejection probabilities for tests of $H_{0,\,M}^{(l)}$, for $M = 5$, in an $AR(1)$ setting.} \label{tab.ar1.lmk}
\end{table}

We observe that, while the unstudentized permutation test somewhat surprisingly provides Type 1 error control at the nominal level $\alpha$ for negative values of $\rho$, it does not do so for positive values of $\rho$. For all values of $\rho$, the studentized permutation test has rejection probabilities below the nominal level $\alpha$.

As in Subsection \ref{subsec.global.sim}, there are several computational choices to be made in practice. In particular, on account of the same numerical issues involving the studentization factor $\hat{\tau}_n^2$, we truncate the variance estimate at the lower bound $\epsilon = 10^{-3}$. Similarly, we must choose a truncation sequence $\{b_n: n \in \nn\}$ to be used in the definition of $\hat{\tau}_n^2$. In the above simulations, the choice $b_n = [n^{1/3}]$ was used. 

\section{Conclusions}

When the fundamental assumption of exchangeability does not necessarily hold, permutation tests are invalid unless strict conditions on underlying parameters of the problem are satisfied. For instance, the permutation test of $H_0^{(g)}$ based on the classical Mann-Kendall statistic is asymptotically valid only when $\sigma^2$ is as defined in (\ref{lim.variance.centre}), is equal to 4/9. Hence rejecting the null must be interpreted correctly, since rejection of the null with this permutation test does not necessarily imply that the sequence truly does exhibit monotone trend, in the sense that the quantity 

$$
\lim_{n \to \infty}\binom{n}{2}^{-1}\sum_{i < j}(\pp(X_i < X_j) - \pp(X_i > X_j))
$$

\noindent may be equal to zero, and the Mann-Kendall test will still reject the null hypothesis. We provide a testing procedure that allows one to obtain asymptotic rejection probability $\alpha$ in a permutation test setting. A significant advantage of this test is that it retains the property of finite-sample exactness of the Mann-Kendall test under the assumption of i.i.d., as well as achieving asymptotic level $\alpha$ in a much wider range of settings than the aforementioned tests. This test also retains the convenient property of the classical Mann-Kendall test that the permutation distribution is fixed for a given sample size $n$. An analogous permutation testing procedure also permits for asymptotically valid inference for the newly-introduced notion of local monotone trend.

Correct implementation of a permutation test is crucial if one is interested in confirmatory inference via hypothesis testing; indeed, proper error control of Type 1, 2 and 3 errors can be obtained for tests of global or local monotone trend by basing inference on test statistics which are asymptotically pivotal. A framework has been provided for tests of both local and global monotone trend.

In this paper, we have  defined specific  notions of a lack of global monotone trend and a lack of local monotone trend of order $M$, and constructed asymptotically valid testing procedures of these null hypotheses. Future work will expand on the methods presented herein, in order to provide analogous permutation testing procedures in the context of other commonly-used tests for monotone trend.

\addcontentsline{toc}{section}{Bibliography}

\bibliographystyle{apalike}
\bibliography{perm_test_thesis}

\end{document}